\documentclass{article}
\usepackage[utf8]{inputenc}
\usepackage{amssymb}
\usepackage{amsmath}
\usepackage{amsthm}

\newtheorem{theorem}{Theorem}[section]
\newtheorem{proposition}[theorem]{Proposition}
\newtheorem{lemma}[theorem]{Lemma}
\newtheorem{fact}[theorem]{Fact}
\newtheorem{corollary}[theorem]{Corollary}

\theoremstyle{definition}
\newtheorem{definition}[theorem]{Definition}
\newtheorem{example}[theorem]{Example}
\newtheorem{notation}[theorem]{Notation}

\usepackage{color}

\begin{document}

\author{Alice Medvedev\thanks{This material is based upon work supported by the National Science Foundation under grant no. DMS-1500976.} \and Alex Van Abel\footnotemark[1]}
\title{Variations on the Feferman-Vaught Theorem, with applications to $\prod_p \mathbb{F}_p$}
\maketitle

\begin{abstract}
Using the Feferman-Vaught Theorem, we prove that a definable subset of a product structure must be a Boolean combination of open sets, in the product topology induced by giving each factor structure the discrete topology. We prove a converse of the Feferman-Vaught theorem for families of structures with certain properties, including families of integral domains. We use these results to obtain characterizations of the definable subsets of $\prod_p \mathbb{F}_p$ -- in particular, every formula is equivalent to a Boolean combination of $\exists \forall \exists$ formulae.
\end{abstract}

\section{Introduction}

The central object of our inquiry is the infinite direct product ring $\prod_p \mathbb{F}_p$, where for each prime $p$, $$\mathbb{F}_p := \mathbb{Z} / p \mathbb{Z}$$ is the finite field with $p$ elements. This ring is associated with the ring of adeles, an important object in number theory. The model theory of the adeles, first considered by Weispfenning in \cite{weis}, is the subject of ongoing work by Derakhshan and Macintyre (\cite{dermacad}, \cite{dermacfef}). Like their work, our paper relies on the celebrated theorem of Feferman and Vaught \cite{fefvau} about definable sets in generalized products of structures. Part of this paper (sections 2 and 3) is a collection of some fairly straightforward and almost certainly known refinements and consequences of the Feferman-Vaught Theorem that we could not find in literature.

Although the model theory of the adeles inspired this paper, no adeles or valued fields appear in it beyond this paragraph. The ring of adeles of $\mathbb{Q}$ is the restricted product of the reals and the $p$-adic completions of the rationals; namely $$\mathbb{A}_\mathbb{Q} := \{a \in \mathbb{R} \times \prod_p \mathbb{Q}_p : a(p) \in \mathbb{Z}_p \mbox{ for all but finitely many } p\}.$$ The product of the $p$-adic valuations on $\mathbb{A}_\mathbb{Q}$ is not a valuation per se, as it takes values in the product of countably many copies of $\mathbb{Z}$ (in particular, the ordering on the value group is not linear). The analog of the residue field is the product $\prod_p \mathbb{F}_p$.

%The model theory of valued fields is a rich and active research area. In the 60s, Ax and Kochen \cite{axkochen}, and independently Ershov, used model theoretic techniques to prove the celebrated Ax-Kochen-Ershov theorem, a positive result on the existence of $p$-adic solutions to homogenous polynomials. Hrushovski introduced stable domination as a means of analyzing types in algebraically closed valued fields \cite{hrush}. Avni, Hrushovski, Kazhdan and others have studied connections between valued fields and motivic integration, which has applications far outside mathematical logic. Additionally, valued fields are natural models of NIP theories, which are another currently active research area in model theory.

Whereas Derakhshan and Macintyre work with restricted products of valued fields, together with the associated value group and the restricted residue ring, our paper works out the details for the simpler case of unrestricted products of rings. The residue product-of-fields $\prod_p \mathbb{F}_p$ from Derakhshan and Macintyre is our motivating example. In this paper we prove that the definable subsets of $\prod_p \mathbb{F}_p$ are Boolean combinations of $\exists \forall \exists$ sets, and are exactly the sets (atomically) definable in the full generalized product (Definition \ref{genprod}). Derakhshan and Macintyre prove similar results for the ring of adeles in \cite{dermacad}, yet our results do not immediately follow from their work, as they rely heavily on the restrictedness of the adeles. Indeed, our result that the full generalized product is definable in $\prod_p \mathbb{F}_p$ applies more generally to any product of integral domains, and our $\exists \forall \exists$ reduction applies to any product of distinct finite fields. Such products arise, as $\prod_p \mathbb{F}_p$ does, as residue products-of-fields for rings of adeles $\mathbb{A}_K$ for number fields $K$.

There is another reason that the ring $\prod_p \mathbb{F}_p$ is interesting, beyond the connection to the adeles and the excuse to meditate on the Feferman-Vaught Theorem. This structure, as well as the associated atomic Boolean algebra $\mathcal{P}(\Lambda)$ of an infinite set $\Lambda$, are peculiarly tame for something that lies on the ``bad'' side of every stability-theoretic dividing line: they have SOP, the strong order property; IP, the independence property; and TP$_2$, the tree property of the second kind. Yet their definable subsets are still quite well-behaved. This inspires the question: is there some yet-to-be-discovered dividing line which would classify $\prod_p \mathbb{F}_p$ and $\mathcal{P}(\Lambda)$ as ``nice''?

\subsection{Outline of the Paper}

Section 1 contains our motivations, this outline, and some background and notation.

In Section 2, we collect and spell out some folklore around the Feferman-Vaught Theorem \cite{fefvau}. The results are very natural and almost certainly known, the proofs are straightforward, but we could not find them in literature. We first recall the Feferman-Vaught Theorem \cite{fefvau}, in both its original form and in its more common formulation for Cartesian product structures, if only to set up the notation we use throughout the paper. We then show that definable subsets of products are combinatorially well-behaved, and are Boolean combinations of open sets in the product topology (see Definition \ref{dptdef}).

In Section 3, we show that the limitations placed by the Feferman-Vaught Theorem on the definable subsets of $\prod_p \mathbb{F}_p$ are sharp (Theorem \ref{gendef}), i.e. every relation in the full generalized product is definable in the direct product. This is in contrast to the Feferman-Vaught Theorem for products, which could be phrased as saying every definable subset of the direct product is an atomic relation in the full generalized product. The only properties we need of $\prod_p \mathbb{F}_p$ for this result is that each $\mathbb{F}_p$ is an integral domain; in Theorem \ref{gendef} we abstract away the algebra and give sufficient model-theoretic conditions, satisfied by integral domains. Again, this is not deep, but quite useful and not written down anywhere, as far as we know.

In Section 4, we bring together the results in Sections 2 and 3 and apply them specifically to the product structure $\prod_p \mathbb{F}_p$; and, in fact, to the general case of $\prod_{q \in X} \mathbb{F}_q$ where $X$ is any subset of the prime powers. We obtain a quantifier reduction result: every formula is equivalent in $\prod_{q \in X} \mathbb{F}_q$ to a Boolean combination of $\exists \forall \exists$ formulae. This result is much more specific to our structure than the results in previous sections, and uses the results from the early 70's in the model theory of finite fields, as described in the survey paper \cite{chatz}.

\subsection{Notation and Background}

In this paper, we work exclusively with first-order languages, and ``definable'' means ``definable with parameters''. Tuples of elements will be denoted as $\bar{a}$ and sometimes as $(a_1, \ldots, a_n)$; the arity $n$ is usually left implied. As an abuse of notation, we often write $\bar{a} \in \mathcal{A}$ when we formally mean $\bar{a} \in \mathcal{A}^n$.

For an $L$-formula $\varphi(\bar{x})$ and an $L$-structure $\mathcal{A}$, we let $\varphi^\mathcal{A}$ be the set $$\varphi^\mathcal{A} := \{\bar{a} \in \mathcal{A} : \mathcal{A} \models \varphi(\bar{a})\}.$$

Our background reference for model theory is \cite{hodges}, and our background reference for algebra is \cite{dumfoo}.

\subsubsection{Direct Products}
We assume the reader is familiar with the definition of the Cartesian product $\prod_{\lambda \in \Lambda} X_\lambda$ of a family of sets $\{X_\lambda : \lambda \in \Lambda\}$.

We will be concerned with finite tuples of elements from $\prod_{\lambda \in \Lambda} X_\lambda$, and the corresponding finite tuples in each $X_\lambda$, so we introduce the following notation.

\begin{notation}
\label{lambdabrack}
Let $\bar{x} = (x_1, \ldots, x_n) \in (\prod_{\lambda \in \Lambda} X_\lambda)^n$. Then for each $\lambda$, we define $\bar{x}[\lambda] = (x_1(\lambda), \ldots, x_n(\lambda)) \in (X_\lambda)^n$. Futhermore, if $\Lambda_0 \subseteq \Lambda$, then we let $\bar{x}[\Lambda_0]$ denote the element $\bar{y} \in (\prod_{\lambda \in \Lambda_0} X_\lambda)^n$ such that $\bar{y}[\lambda] = \bar{x}[\lambda]$ for each $\lambda \in \Lambda$.
\end{notation}

\begin{definition}[Direct Product Structure]
\label{dpsdef}
Let $L$ be a language and let\\ $\{\mathcal{A}_\lambda : \lambda \in \Lambda\}$ be an indexed collection of $L$-structures. We put an $L$-structure $\mathcal{A}$ on the set $A = \prod_{\lambda \in \Lambda} A_\lambda$ by decreeing, for every atomic formula $\phi$, that $\mathcal{A} \models \phi(\bar{a})$ if and only if $\mathcal{A}_\lambda \models \phi(\bar{a}[\lambda])$ for every $\lambda$. More concretely,
\begin{itemize}
\item For every constant symbol $c$, define $c^\mathcal{A}$ by $c^\mathcal{A}(\lambda) = c^{\mathcal{A}_\lambda}$.
\item For every function symbol $F$ and tuple $\bar{a} \in A$, define \\$(F^\mathcal{A}(\bar{a}))(\lambda) = F^{\mathcal{A}_\lambda}(\bar{a}[\lambda])$.
\item For every relation symbol $R$ and tuple $\bar{a} \in A$, we let $\mathcal{A} \models R(\bar{a})$ exactly when $\mathcal{A}_\lambda \models R(\bar{a}[\lambda])$ for every $\lambda \in \Lambda$.
\end{itemize}
We call the resulting structure the \emph{direct product} of the family $\{\mathcal{A}_\lambda : \lambda \in \Lambda\}$, denoted by $\prod_{\lambda \in \Lambda} \mathcal{A}_\lambda$.
\end{definition}

\subsubsection{Boolean Algebras}
\begin{definition}
$L_{Bool} := \{0,1,\triangledown, \triangle, ^C\}$ is the language of Boolean algebras, with constants $0$ and $1$, binary operations $\triangledown$ and $\triangle$ and a unary operation $^C$.
\end{definition}

The canonical $L_{Bool}$ structure on the power set $\mathcal{P}(I)$ of a set $I$ is
$$0^{\mathcal{P}(I)} = \emptyset, \hspace{10pt} 1^{\mathcal{P}(I)} = I, \hspace{10pt} X^{(C) \hspace{2pt} \mathcal{P}(I)} = I - X,$$
$$X \hspace{2pt}\triangledown^{\mathcal{P}(I)}\hspace{2pt} Y = X \cup Y, \hspace{10pt} X \hspace{2pt} \triangle^{\mathcal{P}(I)} \hspace{2pt} Y = X \cap Y.$$

The following notions from the theory of Boolean algebras are used in this paper.

Let $\mathcal{B}$ be a Boolean algebra.

The partial ordering $\leq$ on $\mathcal{B}$ is defined by $a \leq b$ whenever $a \triangle b = a$.

An \emph{atom} in $\mathcal{B}$ is a nonzero element $a \in \mathcal{B}$ which is minimal in the partial order; i.e., for any other $b \in \mathcal{B}$, if $b \leq a$ then $b = a$ or $b = 0$.

An \emph{atomic Boolean algebra} is a Boolean algebra in which every nonzero element is the supremum of the set of atoms below it.

For a set $\Lambda$, the partial order induced on the Boolean algebra $\mathcal{P}(\Lambda)$ is ordinary set inclusion. The atoms are the singleton subsets $\{\lambda\}$ for $\lambda \in \Lambda$, and $\mathcal{P}(\Lambda)$ is an atomic Boolean algebra, a fact which will be used in Section 2.

A good introduction to Boolean algebras is the book \cite{givhalm}.

\section{The Feferman-Vaught Theorem}

Most of the results in this section may be found in \cite{fefvau}.

Fix a first-order language $L$ and a family of $L$-structures $\{\mathcal{A}_\lambda : \lambda \in \Lambda\}$. Let $\mathcal{A}$ be the direct product structure $\prod_{\lambda \in \Lambda}$.

For an atomic $L$-formula $\phi$, recall that $\mathcal{A} \models \phi(\bar{a})$ if and only if $\mathcal{A}_\lambda \models \phi(\bar{a}[\lambda])$ at every $\lambda$. In the language $L_{Bool}$ of Boolean algebras, $\mathcal{A} \models \phi(\bar{a})$ if and only if $\mathcal{P}(\Lambda) \models K_\phi(\bar{a}) = 1$, where $K_\phi(\bar{a})$ is the set $\{\lambda \in \Lambda : \mathcal{A}_\lambda \models \phi(\bar{a}[\lambda])\}$. In that sense, the truth of $\phi(\bar{x})$ in $\mathcal{A}$ is determined by looking at truth of a formula (in this case, itself) in each $\mathcal{A}_\lambda$ for each $\lambda \in \Lambda$, and then evaluating an $L_{Bool}$-formula at the resulting subset of $\Lambda$. Loosely, the Feferman-Vaught Theorem for Products (Proposition \ref{fvprod}) says that every $L$-formula may be determined in a similar fashion.

To state this more formally, we need some definitions. We begin with the following syntactic device.

\begin{definition}[Acceptable Sequence]
\label{accdef}
An \emph{acceptable sequence} is a sequence of formulae $\xi = \langle \Phi, \theta_1,\ldots,\theta_m \rangle$, where

i) $\Phi$ is an $L_{Bool}$-formula, with free variables (among) $y_1, \ldots, y_m$.

ii) $\theta_i$ is an $L$-formula for each $1 \leq i \leq m$.

The free variables of an acceptable sequence $\xi$ are the free variables of $\theta_1, \ldots, \theta_m$.
\end{definition}

Next, we restate the notation $K_\theta(\bar{a})$ that was used at the beginning of this section. Formally, $K_\theta$ is a function from $(\prod_{\lambda \in \Lambda} \mathcal{A}_\lambda)^n$ to $\mathcal{P}(\Lambda)$. For a tuple $\bar{a} \in \prod_{\lambda \in \Lambda} \mathcal{A}_\lambda$, the set $K_\theta(\bar{a})$ is the set of indices at which $\theta(\bar{a}[\lambda])$ holds.

\begin{definition}[$K_\theta(\bar{a})$]
\label{kthetadef}
For an $L$-formula $\theta(\bar{x})$, define $$K_\theta(\bar{a}) := \{\lambda \in \Lambda : \mathcal{A}_\lambda \models \theta(\bar{a}[\lambda])\}.$$
\end{definition}

The functions $K_\theta$ provide a means of connecting the language $L$ with the language $L_{Bool}$, and allow us to interpret acceptable sequences $\xi(x_1,\ldots,x_n) = \langle \Phi; \theta_1, \ldots, \theta_m \rangle$ as $n$-ary predicates $Q_\xi(x_1,\ldots,x_n)$ on $\prod_{\lambda \in \Lambda} \mathcal{A}_\lambda$ as follows.

\begin{definition}[$\mathcal{Q}_\xi$]
\label{qxidef}
Let $\xi(x_1,\ldots,x_n) = \langle \Phi; \theta_1, \ldots, \theta_m \rangle$. Then the predicate defined by the acceptable sequence $\xi$ is the set $$\mathcal{Q}_\xi^\mathcal{A} := \{\bar{a} \in \prod_{\lambda \in \Lambda} \mathcal{A}_\lambda : \mathcal{P}(\Lambda) \models \Phi(K_{\theta_1}(\bar{a}[\lambda]), \ldots, K_{\theta_m}(\bar{a})[\lambda]).$$
\end{definition}

\begin{definition}[Full Generalized Product]
\label{genprod}
The universe of $\prod_{\lambda \in \Lambda} \mathcal{A}_\lambda$ together with the relations $\mathcal{Q}_\xi^\mathcal{A}$ give a relational structure called the \emph{full generalized product} of the family $\{\mathcal{A}_\lambda : \lambda \in \Lambda\}$.
\end{definition}

As remarked at the beginning of this section, the direct product is definable inside the full generalized product; for any atomic $L$-formula $\phi(\bar{x})$, the predicate $\phi^\mathcal{A}$ is defined by the acceptable sequence $\langle y = 1 ; \phi(\bar{x} \rangle$. Many other product-type constructions may be defined or interpreted inside the full generalized product, although most require expansions of the language $L_{Bool}$. For example, if we wish to require that elements have some property at all but finitely many indices, we add the predicate $Fin$ to $L_{Bool}$. The Full Feferman-Vaught Theorem \ref{fefvaughtfull} holds in its same form for expansions of $L_{Bool}$ as well.

Feferman and Vaught called this next theorem \cite[Theorem~3.1]{fefvau} their ``basic theorem for generalized products''. It is, from one point of view, a quantifier elimination result on the full generalized product.

\begin{fact}[The Full Feferman-Vaught Theorem \cite{fefvau}]
\label{fefvaughtfull}
Let $L_{gen}$ be the relation language consisting of the relation symbols $Q_\xi$, as in Definition \ref{qxidef}, so that the full generalized product is an $L_{gen}$-structure. Then for every $L_{gen}$ formula $\varphi(\bar{x})$, there is an acceptable sequence $\xi$ such that for any family of $L$-structures $\{\mathcal{A}_\lambda : \lambda \in \Lambda\}$, and for any tuple $\bar{a}$ from the full generalized product $\mathcal{A}^*$,  $$\mathcal{A}^* \models \varphi(\bar{a}) \leftrightarrow Q_\xi(\bar{a}).$$ Such a sequence $\xi$ may be found effectively from $\varphi$.
\end{fact}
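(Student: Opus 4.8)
The plan is to prove the Full Feferman–Vaught Theorem by induction on the structure of the $L_{gen}$-formula $\varphi$. The key observation is that we must track how acceptable sequences transform under each logical connective and quantifier, so that the class of predicates $\mathcal{Q}_\xi^\mathcal{A}$ is closed under all the syntactic operations that build up $L_{gen}$-formulae. For the base case, an atomic $L_{gen}$-formula is literally of the form $Q_\xi(\bar{x})$ for some acceptable sequence $\xi$, so there is nothing to do: the predicate $Q_\xi$ is by definition $\mathcal{Q}_\xi^\mathcal{A}$, and we take the sequence $\xi$ itself.

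For the inductive step on Boolean connectives, I would show that if $\varphi$ and $\psi$ are equivalent to $Q_{\xi}$ and $Q_{\eta}$ respectively, with $\xi = \langle \Phi; \theta_1, \ldots, \theta_m \rangle$ and $\eta = \langle \Psi; \sigma_1, \ldots, \sigma_k \rangle$, then $\neg \varphi$, $\varphi \wedge \psi$, and $\varphi \vee \psi$ are each equivalent to $Q_\zeta$ for an explicitly constructed $\zeta$. The negation is handled by $\langle \neg \Phi; \theta_1, \ldots, \theta_m \rangle$. For the conjunction, I would concatenate the $L$-formula lists into $\theta_1, \ldots, \theta_m, \sigma_1, \ldots, \sigma_k$ and re-index the $L_{Bool}$-formulae $\Phi$ and $\Psi$ so that their free variables point to the correct positions in the combined list, then take the $L_{Bool}$-conjunction; disjunction is analogous. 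The point here is that the $K_\theta(\bar{a})$ functions only depend on the $L$-formulae, so combining predicates amounts to a purely $L_{Bool}$-level Boolean combination of the outer formulae, which $L_{Bool}$ can express. All of these constructions are manifestly effective and uniform in the family $\{\mathcal{A}_\lambda\}$, which is what guarantees the ``for any family'' and ``effectively'' clauses.

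The main obstacle, as always with Feferman–Vaught, is the existential quantifier step: given that $\varphi(\bar{x}, z)$ is equivalent to $Q_\xi(\bar{x}, z)$ with $\xi = \langle \Phi; \theta_1(\bar{x}, z), \ldots, \theta_m(\bar{x}, z) \rangle$, I must produce an acceptable sequence for $\exists z\, \varphi(\bar{x}, z)$. The difficulty is that witnessing $z$ in the product $\mathcal{A}$ means choosing a value $z(\lambda)$ independently at each coordinate $\lambda$, so the index sets $K_{\theta_i}$ do not simply carry over. The standard resolution is to consider, for each coordinate $\lambda$ and each choice of a ``type'' recording which subset $S \subseteq \{1, \ldots, m\}$ of the $\theta_i(\bar{a}[\lambda], c)$ can be simultaneously satisfied by some $c \in \mathcal{A}_\lambda$, the $L$-formula $\psi_S(\bar{x}) := \exists z\, \bigl( \bigwedge_{i \in S} \theta_i(\bar{x}, z) \wedge \bigwedge_{i \notin S} \neg \theta_i(\bar{x}, z) \bigr)$. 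These formulae $\psi_S$ are genuine $L$-formulae in $\bar{x}$ alone, and the sets $K_{\psi_S}(\bar{a})$ partition $\Lambda$ according to which Boolean pattern of the $\theta_i$ is realizable at each index. One then checks that $\mathcal{A} \models \exists z\, \varphi(\bar{a}, z)$ can be rephrased, via an $L_{Bool}$-formula $\Phi^*$ quantifying over how the realizable patterns at each coordinate can be assembled into a global witness, in terms of the finitely many sets $K_{\psi_S}(\bar{a})$. Concretely, $\Phi^*$ asks whether there is a way to cover $\Lambda$ by the pieces $K_{\psi_S}(\bar{a})$ such that the resulting coordinatewise truth-values of the $\theta_i$ satisfy $\Phi$; because there are only finitely many subsets $S$, this is a finite $L_{Bool}$-combination and hence expressible.

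I would close by noting that the existential case is the crux and that the universal quantifier then follows by the duality $\forall z = \neg \exists z\, \neg$, already handled by the negation and existential steps. The effectiveness claim is inherited at each stage, since every construction above replaces formulae by explicitly computable formulae; tracking this bookkeeping through the induction is routine but tedious, and I would relegate the detailed verification of the existential step --- in particular, the precise definition of $\Phi^*$ in terms of the partition $\{K_{\psi_S}(\bar{a})\}$ --- to the cited source \cite{fefvau}, since it is exactly their Theorem~3.1.
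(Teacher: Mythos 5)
The paper does not actually prove this statement: it is presented as a \emph{Fact}, with the proof deferred entirely to Feferman and Vaught's original paper \cite{fefvau} (their Theorem 3.1). So there is no in-paper argument to compare against; what you have written is a reconstruction of the argument from the cited source, and in outline it is correct. The induction on $L_{gen}$-formulae, the trivial base case, the concatenate-and-reindex treatment of Boolean connectives, and above all the identification of the existential quantifier as the crux --- resolved by the realizable-pattern formulae $\psi_S(\bar{x}) = \exists z\, \bigl( \bigwedge_{i \in S} \theta_i(\bar{x},z) \wedge \bigwedge_{i \notin S} \neg \theta_i(\bar{x},z) \bigr)$ and a Boolean-algebra-level formula $\Phi^*$ asserting that the locally realizable patterns can be assembled into a global witness --- is exactly how the original proof goes, and each step visibly preserves uniformity in the family and effectiveness.

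Two points of precision. First, your closing description of $\Phi^*$ as ``a finite $L_{Bool}$-combination'' undersells what is needed: $\Phi^*$ must \emph{existentially quantify over elements of} $\mathcal{P}(\Lambda)$, namely over pieces $z_S \leq y_S$ (one for each $S \subseteq \{1,\ldots,m\}$) that partition the top element, with $\Phi$ then evaluated at the elements $w_i = \triangledown_{S \ni i}\, z_S$. What the finiteness of the collection of patterns buys you is that only finitely many quantified Boolean variables are needed, so $\Phi^*$ is a first-order $L_{Bool}$-formula; it is emphatically not quantifier-free, and this is precisely why acceptable sequences are defined to allow arbitrary (quantified) $L_{Bool}$-formulae in the first slot. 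Your earlier sentence (``quantifying over how the realizable patterns \ldots can be assembled'') shows you understand this, so treat this as a wording correction rather than a gap. Second, your base case omits equality: $L_{gen}$ is relational, but atomic formulae of first-order logic also include $x_i = x_j$, which requires the acceptable sequence $\langle y_1 = 1;\ x_i = x_j \rangle$, and atomic formulae $Q_\xi$ applied to repeated or permuted variables are handled by substituting into the $\theta_i$. With these small repairs your induction goes through, and it supplies the proof that the paper itself chose to cite rather than write out.
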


This next proposition is the formalized version of the remarks at the beginning of the section. It is this form (or a slightly generalized form for reduced products) that the ``Feferman-Vaught Theorem'' is often stated, for example in \cite{hodges}.

\begin{proposition}[The Feferman-Vaught Theorem for Products]
\label{fvprod}
For every $L$-formula $\varphi(\bar{x})$, there are $L$-formulae $\theta_1(\bar{x}), \ldots, \theta_m(\bar{x})$ and an $L_{Bool}$-formula $\Phi(y_1,\ldots,y_m)$ so that for any family of $L$-structures $\{\mathcal{A}_\lambda : \lambda \in \Lambda\}$ and any\\ $\bar{a} \in \prod_{\lambda \in \Lambda} \mathcal{A}_\lambda$, we have $\prod_{\lambda \in \Lambda} \mathcal{A}_\lambda \models \varphi(\bar{a})$ if and only if in the power set of $\Lambda$,\\ $\mathcal{P}(\Lambda) \models \Phi(K_{\theta_1}(\bar{a}), \ldots, K_{\theta_m}(\bar{a})).$ The formulae $\Phi, \theta_1, \ldots, \theta_m$ may be found effectively from $\varphi$.
\end{proposition}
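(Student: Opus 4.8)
The plan is to deduce this ``product'' form from the full version, Fact \ref{fefvaughtfull}, by recognizing the direct product $\mathcal{A} = \prod_{\lambda \in \Lambda} \mathcal{A}_\lambda$ as a definitional reduct of the full generalized product $\mathcal{A}^*$, which lives on the same universe. The bridge is the observation recorded just after Definition \ref{genprod}: every atomic $L$-formula $\phi(\bar{x})$ defines, in $\mathcal{A}$, exactly the relation interpreting $Q_{\langle y = 1;\ \phi\rangle}$ in $\mathcal{A}^*$, since both sides equal $\{\bar{a} : \mathcal{P}(\Lambda) \models K_\phi(\bar{a}) = 1\}$. Thus the atomic diagram of $\mathcal{A}$ is already visible among the $L_{gen}$-relations of $\mathcal{A}^*$.

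First I would define a syntactic translation $\varphi \mapsto \varphi^*$ from $L$-formulae to $L_{gen}$-formulae by structural recursion: replace each atomic $L$-subformula $\phi$ by the atomic $L_{gen}$-formula $Q_{\langle y = 1;\ \phi\rangle}$, and leave every Boolean connective and quantifier unchanged (legitimately, since the quantifiers of $\varphi$ range over $\prod_\lambda A_\lambda$, the common universe of $\mathcal{A}$ and $\mathcal{A}^*$). This translation is manifestly effective. I would then verify by induction on the complexity of $\varphi$ that $\varphi^{\mathcal{A}} = (\varphi^*)^{\mathcal{A}^*}$, i.e.\ that $\mathcal{A} \models \varphi(\bar{a})$ iff $\mathcal{A}^* \models \varphi^*(\bar{a})$ for every $\bar{a}$. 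The base case is precisely the bridge observation above; the connective and quantifier cases are immediate because the two structures share a domain and interpret $\wedge, \neg, \exists$ identically, so the translation commutes with each formula-building step.

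Finally I would apply Fact \ref{fefvaughtfull} to the $L_{gen}$-formula $\varphi^*$, obtaining an acceptable sequence $\xi = \langle \Phi;\ \theta_1,\ldots,\theta_m\rangle$, uniform in the family $\{\mathcal{A}_\lambda : \lambda \in \Lambda\}$, whose relation $Q_\xi$ interprets in $\mathcal{A}^*$ as $\mathcal{Q}_\xi^{\mathcal{A}}$ by Definition \ref{qxidef}. Chaining the two equalities gives $\varphi^{\mathcal{A}} = (\varphi^*)^{\mathcal{A}^*} = \mathcal{Q}_\xi^{\mathcal{A}}$, and unwinding Definition \ref{qxidef} shows this says exactly that $\prod_\lambda \mathcal{A}_\lambda \models \varphi(\bar{a})$ iff $\mathcal{P}(\Lambda) \models \Phi(K_{\theta_1}(\bar{a}), \ldots, K_{\theta_m}(\bar{a}))$, with $\Phi$ and the $\theta_i$ read off directly from $\xi$. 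Effectiveness is inherited, since both $\varphi \mapsto \varphi^*$ and the procedure of Fact \ref{fefvaughtfull} are effective.

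I do not expect a deep obstacle on this route, because the genuine mathematical content --- how an existential quantifier over the product is absorbed into a Boolean combination of the index-sets $K_\theta$ --- is entirely hidden inside the assumed Fact \ref{fefvaughtfull}. The only real work is the bookkeeping of the translation and the induction, and the single point deserving care is that the quantifier step is valid precisely because $\mathcal{A}$ and $\mathcal{A}^*$ \emph{literally} share a universe. Were one instead to prove the proposition from scratch without Fact \ref{fefvaughtfull}, the existential case of the induction would become the crux, requiring the combinatorial description of how $K_\theta(\bar{a})$ behaves under projecting off a variable.
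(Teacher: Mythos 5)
Your proposal is correct and follows essentially the same route as the paper's own proof: translate $\varphi$ into an $L_{gen}$-formula $\varphi^*$ by replacing atomic subformulae with the relations $Q_{\langle y=1;\ \phi\rangle}$, apply the Full Feferman-Vaught Theorem (Fact \ref{fefvaughtfull}) to $\varphi^*$, and unwind Definition \ref{qxidef}. The only difference is that you spell out the induction showing $\varphi^{\mathcal{A}} = (\varphi^*)^{\mathcal{A}^*}$, which the paper leaves implicit in the phrase ``equivalent acceptable sequence relation.''
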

\begin{proof}
We first pass effectively from $\varphi(\bar{x})$ to an equivalent $L_{Bool}^{gen}$-formula $\varphi^*(\bar{x})$ by replacing each atomic subformula with an equivalent acceptable sequence relation $Q_\xi$, as in the remarks after Definition \ref{genprod}. Then the Full Feferman-Vaught Theorem effectively provides an acceptable sequence $\xi = \langle \Phi; \theta_1, \ldots, \theta_m \rangle$ such that $\varphi^*(\bar{x})$ is equivalent to $Q_\xi(\bar{x})$ in every full generalized product. The conclusion of the proposition follows from the definition of the interpretation of $Q_\xi$ in a full generalized product.
\end{proof}

The Feferman-Vaught Theorem for Products says that every $L$-definable subset of a product structure is defined by an acceptable sequence. It is not necessarily true that every acceptable sequence is $L$-definable; the main theorem in Section 3 is that certain technical conditions on the family $\{\mathcal{A}_\lambda : \lambda \in \Lambda\}$ imply that every acceptable sequence \emph{is} internally definable, and that in particular this is true of $\prod_p \mathbb{F}_p$.

The Feferman-Vaught Theorem can be used to show that subsets of $\prod_{\lambda \in \Lambda} \mathcal{A}_\lambda$ which are in a sense ``strongly mixed'' with their complements must be undefinable.

\begin{theorem}
\label{chinese}
Suppose $X \subseteq \mathcal{A}^n$ has the property that for every finite $\Lambda_0 \subseteq \Lambda$ and all $\bar{c} \in \prod_{\lambda \in \Lambda_0} \mathcal{A}_\lambda$, there exist $\bar{a}, \bar{b} \in \prod_{\lambda \in \Lambda} \mathcal{A}_\lambda$ such that $\bar{a}[\Lambda_0] = \bar{b}[\Lambda_0] = \bar{c}$, and $\bar{a} \in X$, and $\bar{b} \notin X$. Then $X$ is not a definable subset of $\mathcal{A}^n$.
\end{theorem}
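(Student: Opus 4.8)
The plan is to assume toward a contradiction that $X$ is definable and to use the Feferman--Vaught Theorem for Products to reduce membership in $X$ to a Boolean-algebra computation whose answer can be pinned down using only finitely many coordinates. First I would write $X = \varphi^{\mathcal A}$ for an $L$-formula $\varphi(\bar x)$ (parameters are harmless: apply Proposition \ref{fvprod} to $\varphi(\bar x, \bar y)$ and then substitute the parameter tuple for $\bar y$). Proposition \ref{fvprod} then yields $L$-formulae $\theta_1, \ldots, \theta_m$ and an $L_{Bool}$-formula $\Phi$ with
$$\bar a \in X \iff \mathcal P(\Lambda) \models \Phi(K_{\theta_1}(\bar a), \ldots, K_{\theta_m}(\bar a)).$$
The crucial structural point is that membership of an index $\lambda$ in $K_{\theta_i}(\bar a)$ depends only on the local tuple $\bar a[\lambda]$. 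Hence each $\lambda$ is sorted, according to $\bar a[\lambda]$, into exactly one of the $2^m$ cells
$$c_\epsilon(\bar a) := \bigcap_{i=1}^m K_{\theta_i}(\bar a)^{\epsilon_i}, \qquad \epsilon \in \{+,-\}^m,$$
where $Y^{+} = Y$ and $Y^{-} = \Lambda \setminus Y$. (Note that the hypothesis forces $\Lambda$ infinite, for otherwise one takes $\Lambda_0 = \Lambda$ and $\bar a = \bar c = \bar b$.) The whole strategy is to produce $\bar a \in X$ and $\bar b \notin X$ whose cell data are so similar that $\Phi$ cannot distinguish them.

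The tool that makes ``cannot distinguish'' precise is a standard Ehrenfeucht--Fra{\"i}ss{\'e} / Tarski-invariant analysis of the atomic Boolean algebra $\mathcal P(\Lambda)$: if $\Phi$ has quantifier rank $r$, there is a threshold $N = N(r,m)$ such that the truth of $\Phi(Y_1, \ldots, Y_m)$ in $\mathcal P(\Lambda)$ depends only on the \emph{truncated profile} $\epsilon \mapsto \min(|c_\epsilon|, N)$ of the cells $c_\epsilon = \bigcap_i Y_i^{\epsilon_i}$. Informally, Duplicator wins the $r$-round game by matching, cell by cell, subsets of equal truncated cardinality, and having at least $N$ atoms on both sides of a cell suffices for $r$ rounds regardless of the exact, possibly infinite, cardinalities. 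It therefore suffices to find $\bar a \in X$ and $\bar b \notin X$ with $\min(|c_\epsilon(\bar a)|, N) = \min(|c_\epsilon(\bar b)|, N)$ for every $\epsilon$.

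To build such a pair I would fix the finite data fed to the mixing hypothesis so as to pin the truncated profile in advance. Call a cell $\epsilon$ \emph{recurrent} if it is realizable at infinitely many indices $\lambda$ (i.e.\ for infinitely many $\lambda$ some local value places $\lambda$ in $\epsilon$), and \emph{rare} otherwise. Since there are at most $2^m$ cells, I can choose a finite $\Lambda_0 \subseteq \Lambda$ and $\bar c \in \prod_{\lambda \in \Lambda_0} \mathcal A_\lambda$ so that: for each recurrent cell, $\Lambda_0$ contains $N$ indices (chosen pairwise disjointly across cells and disjoint from the rare-realizable indices, which is possible since infinitely many realizing indices remain at each step) at which $\bar c$ realizes that cell; and $\Lambda_0$ contains \emph{every} index at which some rare cell is realizable, of which there are only finitely many. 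Then for any extension of $\bar c$ the truncated profile is completely determined: each recurrent cell already holds its $N$ pinned indices, so its truncated count is $N$; and outside $\Lambda_0$ no rare cell is realizable (any index outside $\Lambda_0$ must fall in a recurrent cell), so each rare cell's count equals the number of $\lambda \in \Lambda_0$ that $\bar c$ places in it. Any two extensions of $\bar c$ thus share the same truncated profile. Applying the mixing hypothesis to this $\Lambda_0$ and $\bar c$ produces $\bar a \in X$ and $\bar b \notin X$ extending $\bar c$; by the previous paragraph $\Phi$ agrees on them, so $\bar a \in X \iff \bar b \in X$, the desired contradiction.

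The step I expect to demand the most care is precisely this construction, and the recurrent/rare dichotomy underlying it. The tension is that the mixing hypothesis fixes only a finite part and leaves the cofinite part wholly uncontrolled, so naively the uncontrolled infinite part dominates the cell cardinalities; the resolution is that a rare cell is realizable at only finitely many indices and so can be captured entirely inside $\Lambda_0$ (making it invisible outside $\Lambda_0$), while a recurrent cell can be pre-saturated past $N$ inside $\Lambda_0$ (so that further uncontrolled contributions cannot move its truncated count). Verifying that these finitely many demands can be met simultaneously with pairwise-disjoint index choices, and confirming the Ehrenfeucht--Fra{\"i}ss{\'e} threshold fact for $\mathcal P(\Lambda)$ with $\Lambda$ possibly uncountable, are the two places I would check most carefully.
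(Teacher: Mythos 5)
Your proposal is correct, but it reaches the conclusion by a genuinely different route than the paper. Both arguments start from Proposition \ref{fvprod}, and both ultimately rest on the same insight: truth of the Boolean formula $\Phi$ in $\mathcal{P}(\Lambda)$ can only depend on the cardinalities of the cells $\bigcap_i K_{\theta_i}(\bar a)^{\epsilon_i}$, counted up to a finite threshold. The paper packages this insight by citing quantifier elimination for infinite atomic Boolean algebras (Fact \ref{qefiaba}), sharpening Feferman--Vaught into Lemma \ref{fvtight} and Corollary \ref{boolcomb} (every definable set is a Boolean combination of the threshold sets $E_{\varphi,k}$), then observing that each $E_{\varphi,k}$ is open in the discrete-product topology (Proposition \ref{open}), so every definable set is a Boolean combination of open sets (Theorem \ref{topthm}) and hence has nowhere dense boundary (Corollary \ref{ndbdry}); your mixing hypothesis says precisely that the boundary of $X$ is all of $\mathcal{A}^n$, a contradiction. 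You instead prove the threshold invariance directly by an Ehrenfeucht--Fra\"{\i}ss\'e game on $\mathcal{P}(\Lambda)$ (essentially re-deriving the relevant instance of Fact \ref{qefiaba}; your cell-matching strategy with a threshold doubling per round, say $N(r) = 2^r$, is the standard one and is indeed insensitive to $|\Lambda|$), and you replace the topological argument with the explicit recurrent/rare dichotomy: pre-saturating each recurrent cell past $N$ inside $\Lambda_0$ and swallowing the finitely many rare-realizable indices into $\Lambda_0$ freezes the truncated profile of \emph{every} extension of $\bar c$, after which the hypothesis hands you $\bar a \in X$ and $\bar b \notin X$ that $\Phi$ cannot separate. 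Your construction is sound: the per-index notion of realizability correctly absorbs both the non-uniformity of the factors and the substituted parameters, and your parenthetical dismissal of finite $\Lambda$ matches the paper's own remark. What each route buys: the paper's topological packaging yields reusable intermediate results (Theorem \ref{topthm} and Corollary \ref{ndbdry} stand on their own, and reduce Theorem \ref{chinese} to three lines), while your argument is self-contained, needing no quantifier-elimination citation, and makes completely explicit the finitary pinning mechanism that the nowhere-dense-boundary argument conceals---at the price of carrying out both the game argument and the disjoint-choice bookkeeping by hand.
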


In Example \ref{copyz}, which was the inspiration for the theorem, we show that the copy $\{n \cdot (1,1,1\ldots) : n \in \mathbb{Z}\}$ of $\mathbb{Z}$ in $\prod_p \mathbb{F}_p$ has this property, and therefore is not definable in the ring.

Our proof of Theorem \ref{chinese}, which appears after Corollary \ref{ndbdry}, passes through a fairly natural topology on $\prod_{\lambda \in \Lambda} \mathcal{A}_\lambda$. In fact, Theorem \ref{chinese} follows straightforwardly from the fact that definable subsets of $\mathcal{A}$ are Boolean combinations of open subsets in this topology (Theorem \ref{topthm}).

We begin with the following lemma, which appears in \cite[Lemma~2.2]{fefvau} and is easily verified. Recall \ref{kthetadef} for the definition of $K_\theta(\bar{a})$.

\begin{lemma}
\label{kcomm}
Let $\varphi(\bar{x})$ and $\psi(\bar{x})$ be $L$-formulae. Then for any $\bar{a} \in \mathcal{A}$,
\begin{itemize}
\item $K_{\varphi \wedge \psi}(\bar{a}) = K_\varphi(\bar{a}) \cap K_\varphi(\bar{a})$ and $K_{\varphi \vee \psi}(\bar{a}) = K_\varphi(\bar{a}) \cup K_\varphi(\bar{a})$,
\item $K_{\neg \varphi}(\bar{a}) = \Lambda - K_{\varphi(\bar{a})}$, and
\item $K_{\exists t \varphi(\bar{x},t)}(\bar{a}) = \bigcup_{c \in \mathcal{A}} K_\varphi(\bar{a},c)$.
\end{itemize}
\end{lemma}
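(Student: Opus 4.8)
The plan is to verify each clause by unfolding Definition \ref{kthetadef} and pushing the relevant connective or quantifier through the satisfaction relation of the individual factor $\mathcal{A}_\lambda$. The crucial observation is that $K_\theta(\bar{a})$ is defined purely in terms of the truth of $\theta$ in the factors $\mathcal{A}_\lambda$ (at the projected tuples $\bar{a}[\lambda]$), so none of these clauses appeal to the product semantics of Definition \ref{dpsdef}; they are instances of the Tarski truth definition applied separately in each $\mathcal{A}_\lambda$.

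For the Boolean clauses I would argue directly. For conjunction, $\lambda \in K_{\varphi \wedge \psi}(\bar{a})$ iff $\mathcal{A}_\lambda \models (\varphi \wedge \psi)(\bar{a}[\lambda])$ iff $\mathcal{A}_\lambda \models \varphi(\bar{a}[\lambda])$ and $\mathcal{A}_\lambda \models \psi(\bar{a}[\lambda])$, which says exactly $\lambda \in K_\varphi(\bar{a}) \cap K_\psi(\bar{a})$. The disjunction clause is identical with ``and'' replaced by ``or'', and the negation clause yields the complement $\Lambda \setminus K_\varphi(\bar{a})$ because $\lambda \notin K_\varphi(\bar{a})$ holds exactly when $\mathcal{A}_\lambda \not\models \varphi(\bar{a}[\lambda])$.

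The existential clause is where the argument has genuine content. One inclusion is immediate: if $\lambda \in K_\varphi(\bar{a},c)$ for some $c \in \mathcal{A}$, then $\mathcal{A}_\lambda \models \varphi(\bar{a}[\lambda], c(\lambda))$, so $c(\lambda)$ witnesses $\mathcal{A}_\lambda \models \exists t\, \varphi(\bar{a}[\lambda], t)$, giving $\lambda \in K_{\exists t \varphi}(\bar{a})$ and hence $\bigcup_{c \in \mathcal{A}} K_\varphi(\bar{a},c) \subseteq K_{\exists t \varphi}(\bar{a})$. For the reverse inclusion, suppose $\lambda \in K_{\exists t \varphi}(\bar{a})$, so there is a witness $d \in A_\lambda$ with $\mathcal{A}_\lambda \models \varphi(\bar{a}[\lambda], d)$. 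I must then produce an element $c$ of the \emph{product} $\mathcal{A}$ with $c(\lambda) = d$, so that this single-factor witness is realized by a genuine product tuple.

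The main obstacle — such as it is — is exactly this last step: lifting a witness found in a single factor to an element of the full Cartesian product. Here I would fix any $c_0 \in \mathcal{A}$ (the product is nonempty because each factor is nonempty, invoking the axiom of choice when $\Lambda$ is infinite) and define $c$ by $c(\mu) = c_0(\mu)$ for $\mu \neq \lambda$ and $c(\lambda) = d$. Then $c \in \mathcal{A}$, and $\mathcal{A}_\lambda \models \varphi(\bar{a}[\lambda], c(\lambda))$ gives $\lambda \in K_\varphi(\bar{a}, c) \subseteq \bigcup_{c \in \mathcal{A}} K_\varphi(\bar{a},c)$. The freedom to set the $\lambda$-coordinate independently of all the others is precisely the Cartesian product structure of $\mathcal{A}$, and it is the only place in the lemma where that structure is genuinely used.
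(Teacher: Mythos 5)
Your proof is correct, and it is precisely the routine verification the paper has in mind: the paper gives no proof of this lemma, merely citing Lemma~2.2 of \cite{fefvau} and calling it easily verified. You also correctly isolate the one step with actual content --- lifting the single-factor witness $d \in A_\lambda$ to an element $c$ of the product by patching an arbitrary $c_0 \in \mathcal{A}$ at the coordinate $\lambda$ --- which is exactly where the Cartesian product structure (and nonemptiness of every factor) is used.
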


Our next step, Lemma \ref{fvtight}, is a tightening of the Feferman Vaught Theorem for Products (Proposition \ref{fvprod}). This method has previously appeared in the literature; for example, it is used in \cite{burris} to derive 0-1 laws for finite structures (the authors thank Emil Jeřábek for his MathOverflow post \cite{jermo} in which they found this reference). It is also similar to Corollary 9.6.6 in \cite{hodges}. The main tool for this step is a well-known quantifier elimination result for atomic Boolean algebras (For a proof see, for example, \cite[Theorem~6.20]{poizat}.)

\begin{fact}
\label{qefiaba}
The theory of infinite atomic Boolean algebras admits full quantifier elimination in the language $0,1,\hspace{2pt} ^C,\triangle,\triangledown, A_1, A_2, \ldots, A_n, \ldots$; where for each $k$, the unary predicate $A_k(x)$ expresses the (first-order definable) notion ``x is greater than at least $k$ distinct atoms'.'' \cite{poizat}
\end{fact}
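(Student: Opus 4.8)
The plan is to prove this quantifier elimination via the standard substructure / type-realization test for QE (see, e.g., \cite{hodges}). Write $T$ for the theory of infinite atomic Boolean algebras in the expanded language $L^+ = L_{Bool} \cup \{A_1, A_2, \ldots\}$. Then it suffices to show: whenever $\mathcal{M}, \mathcal{N} \models T$ with $\mathcal{N}$ sufficiently saturated, $\mathcal{C}$ is a common $L^+$-substructure of both, and $a \in \mathcal{M}$, there is some $b \in \mathcal{N}$ realizing the same quantifier-free $L^+$-type over $\mathcal{C}$ as $a$. Since each $A_k$ is a relation symbol, such a common substructure $\mathcal{C}$ is simply a Boolean subalgebra embedded in each of $\mathcal{M}, \mathcal{N}$ in a way preserving every predicate $A_k$; note $\mathcal{C}$ itself need not be a model of $T$.

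First I would pin down exactly what the quantifier-free type of $a$ over $\mathcal{C}$ records. Every element of the subalgebra $\langle \mathcal{C}, a\rangle$ can be written as $d \triangle a \,\triangledown\, d' \triangle a^C$ for some $d, d' \in \mathcal{C}$, and because in any atomic Boolean algebra $w = 0 \iff \neg A_1(w)$, equalities are already captured by the $A_k$. The atom-count of such a term is the sum of the (disjoint) atom-counts of $d \triangle a$ and $d' \triangle a^C$. Hence the quantifier-free type of $a$ over $\mathcal{C}$ is completely determined by the data, for each $c \in \mathcal{C}$, of the capped counts $\alpha(c) := \#\{\text{atoms of }\mathcal{M}\text{ below } c \triangle a\}$ and $\beta(c) := \#\{\text{atoms below } c \triangle a^C\}$, each read off the $A_k$ as an element of $\{0, 1, 2, \ldots, \infty\}$.

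The heart of the argument is to realize this data in $\mathcal{N}$. I would write down the type $p(y)$ over $\mathcal{C}$ whose formulas specify, for each $c \in \mathcal{C}$ and each $k$, whether $A_k(c \triangle y)$ and $A_k(c \triangle y^C)$ hold, matching the values $\alpha(c), \beta(c)$ computed in $\mathcal{M}$. By the normal-form remark above, any realization $b \models p$ has exactly the quantifier-free type of $a$ over $\mathcal{C}$, so it suffices to realize $p$, and since $\mathcal{N}$ is saturated I need only that $p$ is finitely satisfiable. A finite subset of $p$ mentions finitely many parameters, generating a finite subalgebra $\mathcal{C}_0 \leq \mathcal{C}$ with atoms $e_1, \ldots, e_r$, and imposes finitely many ``at least / exactly so many atoms'' constraints on the halves $e_i \triangle y$ and $e_i \triangle y^C$. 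This is exactly where preservation of the $A_k$ by $\mathcal{C} \hookrightarrow \mathcal{N}$ is used: the image of each $e_i$ has the same capped atom-count in $\mathcal{N}$ as $e_i$ has in $\mathcal{M}$, so there are always enough atoms of $\mathcal{N}$ below $e_i$ to effect the split using a \emph{finite} join of atoms. Choosing such a finite join below each $e_i$ and taking the join over $i$ produces a witness, establishing finite satisfiability.

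The main obstacle I anticipate is the realization step, and specifically two delicate points within it. First, $\mathcal{N}$ need not be complete, so one cannot simply form infinite joins of atoms; every witness in the finite-satisfiability check must be built from finite joins, and the genuinely infinite behaviour (the pieces where $A_k$ is required for all $k$) is recovered only in the limit through the saturation of $\mathcal{N}$. Second, one must verify carefully that an element realizing $p$ has the \emph{full} quantifier-free type of $a$ — not merely the right counts on the atoms of each individual $\mathcal{C}_0$ — which is precisely the content of the normal-form claim that the capped atom-counts on the two halves $c \triangle y$, $c \triangle y^C$ determine the counts on all Boolean combinations, and hence all literals $\pm A_k(t)$. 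Once these are in place, the substructure test yields quantifier elimination, and the effective/quantifier-free formula witnessing each $\exists y\,\varphi$ can be read back off as the feasibility condition on the atom-counts of the $e_i$, itself expressible through the predicates $A_k$.
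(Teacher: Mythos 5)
The paper offers no proof of this statement at all: it is stated as a \emph{Fact} and outsourced to Poizat (Theorem~6.20), so there is no internal argument to compare yours against; what can be assessed is whether your blind proof is correct, and it is. Your two pillars are exactly the right ones. First, since models of the theory are atomic, $w = 0$ is equivalent to $\neg A_1(w)$, so via the Shannon normal form $(d \triangle y) \triangledown (d' \triangle y^C)$ (with $d = t(1)$, $d' = t(0)$ ranging over the subalgebra $\mathcal{C}$) every literal of the quantifier-free type of $a$ over $\mathcal{C}$ reduces to capped atom-counts of the two halves $c \triangle a$ and $c \triangle a^C$; this legitimately uses atomicity of $\mathcal{M}$ and $\mathcal{N}$ only, never of $\mathcal{C}$, which as you note need not be a model. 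Second, finite satisfiability of your type $p$ in $\mathcal{N}$ works because the constraints decompose over the atoms $e_1, \ldots, e_r$ of the finite subalgebra generated by the parameters (counts add across these disjoint pieces), and the stock of atoms of $\mathcal{N}$ below each $e_i$ has the same capped size as in $\mathcal{M}$ precisely because the common substructure preserves every $A_k$ — the one point where the expanded language genuinely does its work. The two delicate points you flag are the real ones, and your resolutions are correct; the only cosmetic looseness is that in the case $\alpha(e_i) = \infty$, $\beta(e_i)$ finite, the witness piece below $e_i$ is the complement \emph{within} $e_i$ of a finite join of atoms rather than a finite join itself, which your phrasing slightly elides. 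Compared with the cited source, your argument is morally the same mathematics: Poizat runs the count-matching as an explicit back-and-forth system (which makes effectivity, completeness, and decidability of the theory visible as byproducts), whereas your saturated-realization packaging is shorter and leaves effectivity implicit, as you acknowledge in your closing remark. Either form suffices for the role the Fact plays in the paper's Lemma~\ref{fvtight}.
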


For the Boolean algebra $\mathcal{P}(\Lambda)$, the predicate $A_k(X)$ means ``$X$ has at least $k$ elements.''

In this next lemma, we sharpen the Feferman-Vaught Theorem for Products.

\begin{lemma}
\label{fvtight}
For every $L$-formula $\varphi(x_1,\ldots,x_n)$, there are $L$-formulae\\ $\theta_1(\bar{x}), \ldots, \theta_m(\bar{x})$ and a quantifier-free, equality-free formula $\sigma(y_1,\ldots,y_m)$ in the language $\{A_k : k \in \omega\}$ such that for every infinite index set $\Lambda$, every product structure $\mathcal{A} = \prod_{\lambda \in \Lambda} \mathcal{A}_\lambda$, and every $\bar{a} \in \mathcal{A}^n$,
$$\mathcal{A} \models \varphi(\bar{a}) \iff \mathcal{P}(\Lambda) \models \sigma(K_{\theta_1}(\bar{a}),\ldots,K_{\theta_m}(\bar{a})).$$
\end{lemma}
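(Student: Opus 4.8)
The plan is to start from the conclusion of Proposition \ref{fvprod} and then simplify the Boolean-algebra side of the equivalence using the quantifier elimination for infinite atomic Boolean algebras. First I would apply Proposition \ref{fvprod} to the given $\varphi(\bar{x})$, obtaining $L$-formulae $\theta_1, \ldots, \theta_m$ and an $L_{Bool}$-formula $\Phi(y_1, \ldots, y_m)$ with
\[
\mathcal{A} \models \varphi(\bar{a}) \iff \mathcal{P}(\Lambda) \models \Phi(K_{\theta_1}(\bar{a}), \ldots, K_{\theta_m}(\bar{a})).
\]
Since $\Lambda$ is infinite, $\mathcal{P}(\Lambda)$ is an infinite atomic Boolean algebra, so by Fact \ref{qefiaba} the formula $\Phi$ is equivalent, in every such $\mathcal{P}(\Lambda)$, to a quantifier-free formula $\Phi'(y_1, \ldots, y_m)$ in the language $\{0, 1, {}^C, \triangle, \triangledown\} \cup \{A_k : k \in \omega\}$. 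Because this equivalence is a consequence of the theory of infinite atomic Boolean algebras, the single formula $\Phi'$ works uniformly across all infinite $\Lambda$, which is exactly what the statement demands.

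The formula $\Phi'$ is a Boolean combination of atomic formulae, each of which is either an equality $t_1 = t_2$ between Boolean terms or a predicate $A_k(t)$ applied to a Boolean term $t$ (the terms being built from $y_1, \ldots, y_m$, the constants $0,1$, and the operations ${}^C, \triangle, \triangledown$). Next I would eliminate equality: in $\mathcal{P}(\Lambda)$ the predicate $A_1(s)$ says exactly that $s \neq 0$, so an equality $t_1 = t_2$ is equivalent to $\neg A_1\big((t_1 \triangle t_2^C) \triangledown (t_1^C \triangle t_2)\big)$, the symmetric-difference term on the right being $0$ precisely when $t_1 = t_2$. Performing this replacement everywhere turns $\Phi'$ into a quantifier-free, equality-free formula that is a Boolean combination of formulae of the shape $A_k(t(\bar{y}))$, with Boolean terms appearing only inside the $A_k$'s.

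The final step absorbs those Boolean terms into new $L$-formulae via Lemma \ref{kcomm}. For each Boolean term $t(y_1, \ldots, y_m)$ occurring inside an $A_k$, let $t^\ast(\bar{x})$ be the $L$-formula obtained by substituting $\theta_i$ for $y_i$ and replacing $\triangle, \triangledown, {}^C$ by $\wedge, \vee, \neg$ (and the constants $0,1$ by the $L$-formulae $\neg(x_1 = x_1)$ and $x_1 = x_1$). An easy induction on the structure of $t$, using the three clauses of Lemma \ref{kcomm}, shows that
\[
t\big(K_{\theta_1}(\bar{a}), \ldots, K_{\theta_m}(\bar{a})\big) = K_{t^\ast}(\bar{a})
\]
for every $\bar{a} \in \mathcal{A}$. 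Collecting the resulting formulae $t^\ast$ as a new sequence $\theta'_1, \ldots, \theta'_{m'}$ and rewriting each occurrence of $A_k(t(\bar{y}))$ as $A_k(y_j)$ with the matching index yields the desired quantifier-free, equality-free $\{A_k\}$-formula $\sigma$ satisfying $\mathcal{A} \models \varphi(\bar{a}) \iff \mathcal{P}(\Lambda) \models \sigma(K_{\theta'_1}(\bar{a}), \ldots, K_{\theta'_{m'}}(\bar{a}))$.

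None of the three steps is deep. The only points requiring care are that the quantifier elimination of Fact \ref{qefiaba} is a theory-level equivalence, so that it applies in \emph{every} $\mathcal{P}(\Lambda)$ simultaneously and the output $\sigma, \theta'_j$ are genuinely independent of $\Lambda$; and that the $A_1$-trick faithfully captures equality in each of these power-set algebras. I expect the bookkeeping of steps two and three to be the most error-prone part, but it is entirely routine given Lemma \ref{kcomm} and the reading of $A_1$ as ``nonempty''.
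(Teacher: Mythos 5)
Your proposal is correct and follows essentially the same route as the paper's own proof: apply Proposition \ref{fvprod}, then the quantifier elimination of Fact \ref{qefiaba}, then eliminate equality via the $\neg A_1$ of the symmetric difference, and absorb the Boolean terms into the $\theta_i$ using Lemma \ref{kcomm}. The only (cosmetic) difference is that you eliminate equality \emph{before} absorbing the Boolean operations, which is in fact the tidier order, since the paper's sequence (absorption first, equality last) reintroduces Boolean terms inside the $A_1$'s that then need a second, unmentioned round of absorption.
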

\begin{proof}
Recall that $K_{\theta}(\bar{a})$ is the set $\{\lambda \in \Lambda : \mathcal{A}_\lambda \models \theta(\bar{a}[\lambda])\}$.

The Feferman-Vaught Theorem effectively provides $L$-formulae $\theta_1', \ldots, \theta_{m'}'$ and an $L_{Bool}$-formula $\sigma^{(1)}(y_1,\ldots,y_{m'})$ such that for all $\bar{a} \in \mathcal{A}^n$, $$\mathcal{A} \models \varphi(\bar{a}) \iff \mathcal{P}(\Lambda) \models \sigma^{(1)}(K_{\theta'_1}(\bar{a}), \ldots, K_{\theta'_{m'}}(\bar{a})).$$

As per Fact \ref{qefiaba}, $\sigma^{(1)}$ is equivalent to a quantifier-free ($L_{Bool} \cup \{A_1, A_2, \ldots\})$-formula $\sigma^{(2)}$ modulo $Th(\mathcal{P}(\Lambda))$, i.e. $$\mathcal{P}(\Lambda) \models \forall y_1, \ldots, y_{m'} \hspace{5pt} \sigma^{(1)}(\bar{y}) \leftrightarrow \sigma^{(2)}(\bar{y}).$$

Using Lemma \ref{kcomm}, we remove the non-logical symbols $0,1,\triangle, \triangledown$, and $^C$ out of $\sigma^{(2)}$ and into the formulae $\theta_i$. More specifically, the constant $1$ may be replaced by $K_{x = x}$, and similarly for 0. The function symbols $^C, \triangle, \triangledown$ may be absorbed into the formulae $\theta_i$ as in the following example. $$K_{\phi}(\bar{a}) \triangle (K_\psi(\bar{a}))^C = K_{\phi \wedge \hspace{2pt} \neg \psi}(\bar{a}).$$ Finally, equality may be eliminated using the equivalence $$v_1 = v_2 \iff \neg A_1 ((v_1 \triangle v_2^C) \triangledown (v_2 \triangle v_1^C)).$$ This gives a procedure for finding $m \in \omega$, $L$-formulae $\theta_1(\bar{x}), \ldots, \theta_m(\bar{x})$, and a quantifier-free, equality-free formula $\sigma(y_1,\ldots,y_m)$ in the language $\{A_1, A_2, \ldots\}$ such that for all $\bar{a} \in \mathcal{A}$, $$\mathcal{P}(\Lambda) \models \sigma(K_{\theta_1}(\bar{a}), \ldots, K_{\theta_m}(\bar{a}))$$ $$\iff \mathcal{P}(\Lambda) \models \sigma^{(2)}(K_{\theta'_1}(\bar{a}), \ldots, K_{\theta'_{m'}}(\bar{a}))$$ $$\iff \mathcal{P}(\Lambda) \models \sigma^{(1)}(K_{\theta'_1}(\bar{a}), \ldots, K_{\theta'_{m'}}(\bar{a}))$$ $$\iff \mathcal{A} \models \varphi(\bar{a}).$$
\end{proof}

\begin{corollary}
\label{boolcomb}
Every definable subset of $\mathcal{A}^n$ is a finite Boolean combination of sets of the form $$E_{\varphi, k}(\bar{b}) := \{\bar{a} \in \mathcal{A} : \mathcal{P}(\Lambda) \models A_k(K_{\varphi}(\bar{a},\bar{b}) \}$$ $$= \{\bar{a} \in \mathcal{A} : \mathcal{A}_\lambda \models \varphi(\bar{a}[\lambda], \bar{b}[\lambda]) \mbox{ for at least } k \mbox{ distinct indices } \lambda\}$$ for $L$-formulae $\varphi(\bar{x})$ and natural numbers $k$.
\end{corollary}

We now introduce a topology on the product structure $\mathcal{A}$ which will aid us in proving Theorem \ref{chinese}.

\begin{definition}[The Discrete-Product Topology]
\label{dptdef}
Let $\{X_\lambda : \lambda \in \Lambda\}$ be a family of sets, and let $n > 0$. The \emph{discrete-product topology} on $(\prod_{\lambda \in \Lambda} X_\lambda)^n$ is the topology created by first giving each $X_\lambda$ the discrete topology (in which every singleton, hence every subset, is open), then taking the (Tychnoff) product topology on $\prod_{\lambda \in \Lambda} X_\lambda$, and then again on $(\prod_{\lambda \in \Lambda} X_\lambda)^n$.

More concretely, the discrete-topology has as a basis open sets of the form $$U_{\Lambda_0, b} = \{\bar{x} \in (\prod_{\lambda \in \Lambda} X_\lambda) : \bar{x}[\lambda] = \bar{b}[\lambda] \mbox{ for all } \lambda \in \Lambda_0\}$$ as $\Lambda_0$ ranges over the finite subsets of $\Lambda$ and $\bar{b}$ ranges over $(\prod_{\lambda \in \Lambda_0} X_\lambda)^n$.
\end{definition}

The following proposition is a straightforward unwrapping of definitions which leads to the pleasant property that every definable subset is a Boolean combination of open subsets.

\begin{proposition}
\label{open}
Let $\varphi(\bar{x}, \bar{y})$ be an $L$-formula, let $k$ be a natural number, and let $\mathcal{A}$ be a product structure as above. Then for every $\bar{b} \in \mathcal{A}$, the set $E_{\varphi, k}(\bar{b})$ is an open subset of $\mathcal{A}^n$.
\end{proposition}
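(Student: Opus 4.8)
The plan is to verify openness pointwise: I will show that every point of $E_{\varphi,k}(\bar{b})$ possesses a basic open neighborhood, of the kind $U_{\Lambda_0, \bar{c}}$ furnished by Definition \ref{dptdef}, that is entirely contained in $E_{\varphi,k}(\bar{b})$. Since a subset of a topological space containing a basic open neighborhood of each of its points is open, this suffices.

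First I would fix an arbitrary $\bar{a} \in E_{\varphi,k}(\bar{b})$. By the second description of $E_{\varphi,k}(\bar{b})$ recorded in Corollary \ref{boolcomb}, the set $K_\varphi(\bar{a},\bar{b}) = \{\lambda \in \Lambda : \mathcal{A}_\lambda \models \varphi(\bar{a}[\lambda], \bar{b}[\lambda])\}$ has at least $k$ elements. I would then select any $k$ of them, say $\Lambda_0 = \{\lambda_1, \ldots, \lambda_k\} \subseteq K_\varphi(\bar{a},\bar{b})$; this $\Lambda_0$ is a finite subset of $\Lambda$, so it is a legitimate index set for a basic open set. Consider the basic open neighborhood $U_{\Lambda_0, \bar{a}} = \{\bar{a}' \in \mathcal{A}^n : \bar{a}'[\lambda] = \bar{a}[\lambda] \text{ for all } \lambda \in \Lambda_0\}$, taking $\bar{a}$ itself (restricted to $\Lambda_0$) as the anchor tuple. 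Plainly $\bar{a} \in U_{\Lambda_0, \bar{a}}$.

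For the containment, I would take any $\bar{a}' \in U_{\Lambda_0, \bar{a}}$ and check that $\bar{a}' \in E_{\varphi,k}(\bar{b})$. For each $i \leq k$ we have $\bar{a}'[\lambda_i] = \bar{a}[\lambda_i]$ by membership in $U_{\Lambda_0, \bar{a}}$; since the parameter $\bar{b}$ is fixed, $\mathcal{A}_{\lambda_i} \models \varphi(\bar{a}'[\lambda_i], \bar{b}[\lambda_i])$ holds precisely because $\mathcal{A}_{\lambda_i} \models \varphi(\bar{a}[\lambda_i], \bar{b}[\lambda_i])$ held. Hence $\Lambda_0 \subseteq K_\varphi(\bar{a}', \bar{b})$, so $K_\varphi(\bar{a}',\bar{b})$ has at least $k$ elements and $\bar{a}' \in E_{\varphi,k}(\bar{b})$. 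This gives $U_{\Lambda_0, \bar{a}} \subseteq E_{\varphi,k}(\bar{b})$, completing the argument.

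There is essentially no obstacle here; the proof is a direct unwrapping of definitions. The only point deserving care is the recognition that membership of a tuple in $E_{\varphi,k}(\bar{b})$ is \emph{witnessed} by finitely many coordinates — the $k$ indices at which $\varphi$ holds — and that fixing a tuple on a finite set of coordinates is exactly the kind of constraint the basic open sets of the discrete-product topology impose. It is worth noting that $E_{\varphi,k}(\bar{b})$ is typically not closed: its complement is a ``fewer than $k$ witnesses'' condition that can be destroyed by perturbing a single coordinate, so no analogous finite-support argument produces a neighborhood inside the complement.
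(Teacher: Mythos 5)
Your proof is correct and follows essentially the same argument as the paper's: fix a point of $E_{\varphi,k}(\bar{b})$, take the finite set $\Lambda_0$ of $k$ indices witnessing membership, and observe that any tuple agreeing with the point on $\Lambda_0$ still satisfies $\varphi$ at those $k$ coordinates, hence stays in the set. The only cosmetic difference is that you phrase the neighborhood explicitly as a basic open set $U_{\Lambda_0,\bar{a}}$, while the paper uses the equivalent finite-support characterization of openness directly.
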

\begin{proof}
For $X \subseteq \mathcal{A}^n$ to be open in the product topology is for $X$ to have the property that for any $\bar{a} \in X$, there is a finite set $\Lambda_0 \subseteq \Lambda$ such that for all $\overline{a'}$, if $\overline{a'}[\lambda] = \bar{a}[\lambda]$ for each $\lambda \in \Lambda_0$, then $\overline{a'} \in X$.

Suppose $\bar{a} \in E_{\varphi, k}(\bar{b})$. Then for some $\Lambda_0 = \{\lambda_1, \ldots, \lambda_k\} \subseteq \Lambda$ with $|\Lambda_0| = k$, we have $\mathcal{A}_\lambda \models \varphi(\bar{a}[\lambda], \bar{b}[\lambda])$ for each $\lambda \in \Lambda_0$. If $\overline{a'} \in \mathcal{A}^n$ is such that $\overline{a'}[\lambda] = \bar{a}[\lambda]$ for $\lambda \in \Lambda_0$, then $\mathcal{A}_\lambda \models \varphi(\overline{a'}[\lambda], \bar{b}[\lambda])$ for each $\lambda \in \Lambda_0$, and therefore $\overline{a'} \in E_{\varphi, k}$.

Consequently, $E_{\varphi, k}(\bar{b})$ is an open subset of $\mathcal{A}^n$.
\end{proof}

\begin{theorem}
\label{topthm}
Every definable subset of $\mathcal{A}^n$ is a finite Boolean combination of open sets in the discrete-product topology.
\end{theorem}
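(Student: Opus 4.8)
The plan is to combine the two results immediately preceding the theorem, which together already do all the work. First I would invoke Corollary \ref{boolcomb} to obtain, for any definable $D \subseteq \mathcal{A}^n$, a representation $D = B\bigl(E_{\varphi_1, k_1}(\bar{b}_1), \ldots, E_{\varphi_r, k_r}(\bar{b}_r)\bigr)$ as a finite Boolean combination $B$ of sets of the form $E_{\varphi, k}(\bar{b})$, where the $\varphi_i$ are $L$-formulae, the $k_i$ natural numbers, and the $\bar{b}_i$ parameter tuples from $\mathcal{A}$. Then I would cite Proposition \ref{open}, which asserts that each such $E_{\varphi_i, k_i}(\bar{b}_i)$ is open in the discrete-product topology. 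Substituting the latter into the former immediately exhibits $D$ as a finite Boolean combination of open sets, which is exactly the conclusion.

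That is the entire argument: no further manipulation is required, and I do not expect a genuine obstacle, since the substantive content lives entirely in the earlier results. The real engine is the quantifier-elimination tightening of Lemma \ref{fvtight}, which forces the Feferman--Vaught data into an equality-free, quantifier-free formula $\sigma$ in the $A_k$-language and thereby yields the Boolean-combination form of Corollary \ref{boolcomb}; the openness in Proposition \ref{open} is then an elementary unwrapping of the basis for the product topology, since witnessing ``$\varphi$ holds at at least $k$ indices'' constrains only finitely many coordinates.

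If anything deserves emphasis, it is the conceptual point explaining why the theorem is stated for Boolean combinations rather than for open sets outright. The complement of $E_{\varphi, k}(\bar{b})$ --- the condition that $\varphi$ holds at fewer than $k$ indices, equivalently that $\neg\varphi$ holds at all but at most $k-1$ indices --- is in general closed but not open, so complementation genuinely carries us outside the class of open sets. This is precisely why one cannot strengthen the statement to ``open set'' or ``closed set,'' and it is the structural reason the predicates $A_k$ (rather than mere finite conditions) are the correct primitives isolated in Lemma \ref{fvtight}.
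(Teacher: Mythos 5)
Your proposal is correct and is exactly the paper's own proof: the paper's argument is simply ``Combine Proposition \ref{open} and Corollary \ref{boolcomb},'' which is precisely the substitution you describe. Your additional remark about why complementation forces ``Boolean combination'' rather than ``open'' is sound but not needed for the proof itself.
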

\begin{proof}
Combine Proposition \ref{open} and Corollary \ref{boolcomb}.
\end{proof}

\begin{corollary}
\label{ndbdry}
Every definable subset of $\mathcal{A}^n$ has a nowhere dense boundary.
\end{corollary}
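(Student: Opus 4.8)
The plan is to reduce everything to Theorem \ref{topthm} by showing that, in \emph{any} topological space, the collection of subsets with nowhere dense boundary forms an algebra of sets that contains all open sets. Since a definable subset of $\mathcal{A}^n$ is a finite Boolean combination of open sets by Theorem \ref{topthm}, it will automatically lie in this algebra and hence have nowhere dense boundary. Throughout I would use that the boundary $\partial X = \overline{X} \setminus \operatorname{int}(X)$ is always closed, so that $\partial X$ is nowhere dense exactly when $\operatorname{int}(\partial X) = \emptyset$; this lets me check nowhere density by checking emptiness of an interior rather than of the interior of a closure.

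First I would record the base case that the boundary of an open set $U$ is nowhere dense. Here $\partial U = \overline{U} \setminus U$, and if some nonempty open set $V$ were contained in $\partial U$, then $V$ would meet $\overline{U}$ but be disjoint from $U$, contradicting the density of $U$ in its own closure $\overline{U}$. Hence $\operatorname{int}(\partial U) = \emptyset$. Next I would verify the two closure properties. Closure under complement is immediate from $\partial(X^C) = \partial X$. For finite unions I would use the standard inclusion $\partial(X \cup Y) \subseteq \partial X \cup \partial Y$, which follows from $\overline{X \cup Y} = \overline{X} \cup \overline{Y}$ together with $\operatorname{int}(X) \cup \operatorname{int}(Y) \subseteq \operatorname{int}(X \cup Y)$; since a finite union of nowhere dense sets is nowhere dense and any subset of a nowhere dense set is nowhere dense, $\partial(X \cup Y)$ is nowhere dense. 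Closure under finite intersection then follows from these two by De Morgan's laws.

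With these facts in hand, the algebra of sets with nowhere dense boundary contains every open set, and therefore contains the algebra generated by the open sets, which is precisely the collection of finite Boolean combinations of open sets. Invoking Theorem \ref{topthm}, every definable subset of $\mathcal{A}^n$ is such a combination, so it has nowhere dense boundary, completing the proof.

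I do not expect any serious obstacle here: the argument is pure point-set topology layered on top of Theorem \ref{topthm}. The only points requiring care are the nowhere density of $\partial U$ for open $U$ and the inclusion $\partial(X \cup Y) \subseteq \partial X \cup \partial Y$, both of which are routine. The one conceptual subtlety worth stating explicitly is the reduction ``nowhere dense boundary $\iff$ boundary has empty interior,'' which is what makes the union estimate usable without separately controlling closures.
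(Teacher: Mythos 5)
Your proposal is correct and follows essentially the same route as the paper: the paper's proof likewise observes that open sets have nowhere dense boundaries, that this property is preserved under complements and finite unions, and then invokes Theorem \ref{topthm}. You have simply supplied the routine point-set details (the inclusion $\partial(X \cup Y) \subseteq \partial X \cup \partial Y$, the identity $\partial(X^C) = \partial X$, and the reduction to empty interior for closed sets) that the paper leaves implicit.
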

\begin{proof}
In any topology, open sets have nowhere dense boundaries, and this property is preserved under taking complements and finite unions. Therefore, every Boolean combination of open sets has a nowhere dense boundary.
\end{proof}

\begin{proof}[Proof of Theorem \ref{chinese}]
Suppose $X \subseteq \mathcal{A}^n$ has the property that for every finite $\Lambda_0 \subseteq \Lambda$ and all $\bar{c} \in \prod_{\lambda \in \Lambda_0} \mathcal{A}_\lambda$, there exist $\bar{a}, \bar{b} \in \prod_{\lambda \in \Lambda} \mathcal{A}_\lambda$ such that $\bar{a}[\Lambda_0] = \bar{b}[\Lambda_0] = \bar{c}$, and $\bar{a} \in X$, and $\bar{b} \notin X$. This is equivalent to the condition that every basic open subset of $\mathcal{A}^n$ has nonempty intersection with both $\mathcal{A}^n - X$ and with $X$. So the interiors of $X$ and of $\mathcal{A}^n - X$ are empty, and the boundary of $X$ is all of $\mathcal{A}^n$. Since $\mathcal{A}^n$ is not nowhere dense, then by Corollary \ref{ndbdry}, $X$ can not be definable.
\end{proof}

We note that our assumption that the index set $\Lambda$ is infinite is not necessary for Proposition \ref{open}, Theorem \ref{topthm} and Corollary \ref{ndbdry}. These all hold trivially when $\Lambda$ is finite, for the topology on $\mathcal{A}^n$ is then the discrete topology, where every set is open and every boundary is empty. Theorem \ref{chinese} also holds vacuously, since (taking $\Lambda_0 = \Lambda$) no such set $X$ can exist.

\begin{example}
\label{allbutfin}
Let $\{\mathcal{A}_\lambda : \lambda \in \Lambda\}$ be an infinite family of structures, with $|\mathcal{A}_\lambda| \geq 2$ for each $\lambda$. Let $a_0$ be any fixed element of $\mathcal{A} = \prod_{\lambda \in \Lambda} \mathcal{A}_\lambda$. Then the sets $$\{a \in \mathcal{A} : a(\lambda) = a_0(\lambda) \mbox{ for infinitely many } \lambda \in \Lambda\}, \mbox{ and}$$ $$\{a \in \mathcal{A} : a(\lambda) = a_0(\lambda) \mbox{ for all but finitely many } \lambda \in \Lambda\}$$ have the property from Theorem \ref{chinese}, and so neither set is first-order definable in $\mathcal{A}$.

In particular, if each $\mathcal{A}_\lambda$ is an Abelian group, then the usual copy of the direct sum $\bigoplus_{\lambda \in \Lambda} \mathcal{A}_\lambda$ inside the direct product $\prod_{\lambda \in \Lambda} \mathcal{A}_\Lambda$ is not definable.
\end{example}

\begin{example}
\label{copyz}
Let us consider the ring $\prod_{p \text{ prime }} \mathbb{F}_p$, and the subring\\ $Z = \{n \cdot (1,1,1,\ldots) : n \in \mathbb{Z}\}$. We claim that $Z$ has the property from Theorem \ref{chinese}.

Fix distinct primes $p_1, \ldots, p_m$ and elements $c(p_i) \in \mathbb{F}_{p_i}$. By the Chinese Remainder Theorem, there is an integer $n$ such that $n \equiv c(p_i) \mod p_i$ for each $i = 1, \ldots, m$. If $a = n \cdot (1,1,1,\ldots)$, then $a \in Z$ and $a(p_i) = c(p_i)$ for each $i = 1, \ldots, m$. On the other hand, we can easily find a nonzero element $b$ of $\prod_p \mathbb{F}_p$ such that $b(p) = 0$ for infinitely many $p$, and $b(p_i) = c(p_i)$ for $i = 1, \ldots, m$. Such an element $b$ can not be an element of $Z$, since if $b = n \cdot (1,1,1,\ldots)$, then $n \neq 0$ but $n$ is divisible by infinitely many primes.

Therefore, $Z$ has the property from Theorem \ref{chinese}, and so $Z$ is not definable in $\prod_p \mathbb{F}_p$.
\end{example}

\section{Interpreting the Full Generalized Product in the Direct Product}

The Feferman-Vaught Theorem for Products says that every definable subset of a product structure is atomically definable in the full generalized product. In this section we prove a converse for certain classes of families of structures, including products of integral domains, such as $\prod_p \mathbb{F}_p$: every definable subset of the full generalized product is already definable in the direct product.

In order to formulate the theorem, we need two notions from model theory: interpretations and positive primitive formulae.

\begin{definition}
A \emph{positive primitive (p.p.)} formula is a first-order formula which is (logically equivalent to) a formula of the form $\exists \bar{y} \bigwedge \Phi$, where $\Phi$ is a set of atomic formulae.
\end{definition}

The utility of positive primitivity comes from the following easily-verified fact.

\begin{fact}[\cite{hodges}, Lemma 9.4.2]
\label{ppform}
Let $\phi(\bar{x})$ be a p.p. formula. \\Let $\{\mathcal{A}_\lambda : \lambda \in \Lambda\}$ be a family of $L$-stuctures, and let $\bar{a}$ be a tuple from $\prod_{\lambda \in \Lambda} \mathcal{A}_\lambda$. Then $\prod_{\lambda \in \Lambda} \mathcal{A}_\lambda \models \phi(\bar{a})$ if and only if $\mathcal{A}_\lambda \models \phi(\bar{a}[\lambda])$ for each $\lambda$. (We say that $\phi$ is preserved and reflected in products.)
\end{fact}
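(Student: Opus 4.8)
The plan is to induct on the structure of the p.p. formula $\phi$, which by definition has the shape $\exists \bar{y}\, \bigwedge \Phi$ with $\Phi$ a finite set of atomic formulae. I would first dispatch the two base-level cases and then handle the single existential block. The atomic case is nothing more than Definition \ref{dpsdef}: for an atomic formula $\psi(\bar{x})$ we have $\mathcal{A} \models \psi(\bar{a})$ if and only if $\mathcal{A}_\lambda \models \psi(\bar{a}[\lambda])$ for every $\lambda$, which simultaneously gives both preservation and reflection.

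Next, for a finite conjunction $\chi(\bar{x}) = \bigwedge_{i} \psi_i(\bar{x})$ of atomic formulae, I would observe that both sides of the desired equivalence commute with conjunction: $\mathcal{A} \models \chi(\bar{a})$ holds iff $\mathcal{A} \models \psi_i(\bar{a})$ for each $i$, iff (by the atomic case) $\mathcal{A}_\lambda \models \psi_i(\bar{a}[\lambda])$ for each $i$ and each $\lambda$, iff $\mathcal{A}_\lambda \models \chi(\bar{a}[\lambda])$ for each $\lambda$. This is the observation underlying the conjunction clause of Lemma \ref{kcomm}, now read as a statement about satisfaction rather than about the sets $K_\theta$.

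The remaining step is to absorb the existential quantifier, writing $\phi(\bar{x}) = \exists \bar{y}\, \chi(\bar{x}, \bar{y})$ with $\chi$ a conjunction of atomics. For preservation, if $\mathcal{A} \models \phi(\bar{a})$ then some witness tuple $\bar{b}$ from $\mathcal{A}$ satisfies $\mathcal{A} \models \chi(\bar{a}, \bar{b})$; the conjunction case then gives $\mathcal{A}_\lambda \models \chi(\bar{a}[\lambda], \bar{b}[\lambda])$, so $\bar{b}[\lambda]$ witnesses $\mathcal{A}_\lambda \models \phi(\bar{a}[\lambda])$ at each $\lambda$. The reflection direction carries the real content: assuming $\mathcal{A}_\lambda \models \phi(\bar{a}[\lambda])$ for every $\lambda$, I would choose, for each $\lambda$, a witness tuple $\bar{c}_\lambda$ in $\mathcal{A}_\lambda$ with $\mathcal{A}_\lambda \models \chi(\bar{a}[\lambda], \bar{c}_\lambda)$, and then assemble these coordinatewise, using Notation \ref{lambdabrack} in reverse, into a single tuple $\bar{b}$ from $\mathcal{A}$ with $\bar{b}[\lambda] = \bar{c}_\lambda$. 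The conjunction case applied in the reflecting direction then yields $\mathcal{A} \models \chi(\bar{a}, \bar{b})$, whence $\mathcal{A} \models \phi(\bar{a})$.

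The main obstacle is precisely this coordinatewise assembly of witnesses, and it is where positive primitivity is essential. Choosing one witness per factor requires the Axiom of Choice, which I would invoke explicitly. More importantly, the assembled tuple works only because the matrix $\chi$ is a conjunction of atomic formulae, each of which is decided factorwise; were $\chi$ to contain a negation, a disjunction, or an inner universal quantifier, the factorwise witnesses could not in general be glued into a global witness, and reflection would fail. I would therefore emphasize that the argument uses nothing about $\phi$ beyond its p.p. form, and note that preservation alone — the forward direction — in fact holds for the larger class of formulae built from atomics using only $\wedge$, $\vee$, and $\exists$.
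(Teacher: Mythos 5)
Your proof is correct, and since the paper gives no proof of this statement at all --- it is quoted as a Fact from Hodges (Lemma 9.4.2) and elsewhere described as easily verified --- your argument (atomic case straight from Definition \ref{dpsdef}, commutation with finite conjunction, then projecting, respectively gluing, witnesses for the single existential block, with choice invoked for the gluing) is precisely the standard verification that the citation points to.

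One peripheral claim in your last paragraph is wrong, though it does not affect the proof itself: an inner \emph{universal} quantifier would not in fact break reflection. Formulas built from atomic formulas using only $\wedge$, $\exists$, and $\forall$ are still both preserved and reflected in products, because a universally quantified variable can also be handled coordinatewise in both directions: any local element of $\mathcal{A}_\lambda$ extends to a global element of $\prod_{\lambda \in \Lambda} \mathcal{A}_\lambda$, and any global element restricts to local ones, so $\forall$ commutes with the product just as $\wedge$ does. It is only disjunction and negation that defeat the gluing of witnesses (for disjunction, different factors may satisfy different disjuncts; for negation, even the atomic case already fails to be decided coordinatewise).
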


In fact, Fact \ref{ppform} is the only property needed of positive primitive formulae in this paper -- one could replace every instance of ``positive primitive'' with ``preserved and reflected in products''

We also need the model-theoretic notion of an interpretation of one structure in another. For ease of presentation, we give a specialized definition in which we require the interpreted structures to be Boolean algebras.

\begin{definition}
\label{interp}
 Let $\mathcal{A}$ be an $L$-structure. Let $(\mathcal{B}; 0,1,\triangle,\bigtriangledown,^C)$ be a Boolean algebra. We say that $\mathcal{B}$ is \emph{interpretable in} $\mathcal{A}$ if there are formulae $\Theta(x), \Theta_{=}(x,y),$ $\Theta_0(x), \Theta_1(x), \Theta_\triangle(x,y,z), \Theta_\triangledown(x,y,z),$ and $\Theta_C(x,y)$, and there exists a surjection $F : \Theta^\mathcal{A} \to \mathcal{B}$ such that for all $x,y,z \in \Theta^\mathcal{A}$,

 \begin{itemize}

\item $F(x) = F(y)$ if and only if $\mathcal{A} \models \Theta_=(x,y)$

\item $F(x) = 0^\mathcal{B}$ if and only if $\mathcal{A} \models \Theta_0(x)$, and similarly with 1.

\item $F(x) \triangle^\mathcal{B} F(y) = F(z)$ if and only if $\mathcal{A} \models \Theta_\triangle(x,y,z)$, and similarly with the operation $\bigtriangledown$

\item $F(x)^C = F(y)$ if and only if $\mathcal{A} \models \Theta_C(x,y)$.

\end{itemize}

\end{definition}

\begin{example}
\label{intint}
Let $L = \{0,1,+,\cdot,-\}$ be the language of rings, and let $R$ be a commutative ring. It is well-known that the set of \emph{idempotents} of $R$, the elements $x$ such that $x^2 = x$, is a Boolean algebra, where the Boolean meet operation $\triangle$ is given by ring multiplication, and the Boolean complement of the idempotent $x$ is the idempotent $x^C = 1 - x$. This is an interpretation of a Boolean algebra in $R$, in which the surjection $F$ is the identity map, and the defining formulae are
\begin{itemize}
\item $\Theta_\mathcal{B}(x)$ is $ x^2 = x$,
\item $\Theta_=(x,y)$ is $x  =y$,
\item $\Theta_0(x)$ is $x = 0$,
\item $\Theta_1(x)$ is $x = 1$,
\item $\Theta_\triangle(x,y,z)$ is $ x \cdot y = z$,
\item $\Theta_\triangledown(x,y,z)$ is $(1 - x) \cdot (1 - y) = (1 - z)$,
\item $\Theta_C(x,y)$ is $1 - x = y.$
\end{itemize}
We call the resulting Boolean algebra the \emph{Boolean algebra of idempotents of $R$}. When $R$ is an integral domain, the Boolean algebra of idempotents is the two-element algebra $\{0^R,1^R\}$. When $R = \prod_{\lambda \in \Lambda} R_\lambda$ is a product of integral domains $R_\lambda$, the Boolean algebra of idempotents is isomorphic to the power set $\mathcal{P}(\Lambda)$. The idempotents of $R$ are exactly the elements of $\prod_{\lambda \in \Lambda} \{0^{R_\lambda}, 1^{R_\lambda}\}$, so we can interpret $\mathcal{P}(\Lambda)$ in $R$ by defining $F(X)$ to be the ``characteristic function'' of $X$.
\end{example}

When the product structure $\mathcal{A} = \prod_{\lambda \in \Lambda} \mathcal{A}_\lambda$ interprets the Boolean algebra $\mathcal{P}(\Lambda)$, we may ask whether we can define, internal to the structure, the maps $K_\varphi : \mathcal{A}^n \to \mathcal{P}(\Lambda)$ (recall Definition \ref{kthetadef}). More formally, we have the following definition.
\begin{definition}
\label{represent}
Suppose $\{\mathcal{A}_\lambda : \lambda \in \Lambda\}$ is a family of $L$-structures. Suppose further that the Boolean algebra $\mathcal{P}(\Lambda)$ is interpretable in $\mathcal{A} = \prod_{\lambda \in \Lambda} \mathcal{A}_\lambda$ (via $(F, \Theta_\mathcal{B}, \ldots)$. We say that $K_\phi$ \emph{is represented by } the formula $\Upsilon_\phi(\bar{x},\bar{y})$ if for every $\bar{a} \in \mathcal{A}$ and $\bar{b} \in (\Theta_\mathcal{B})^\mathcal{A}$, we have $\mathcal{A} \models \Upsilon_\phi(\bar{a},\bar{b})$ if and only if $F(\bar{b}) = K_\phi(\bar{a})$. We say $K_\phi(\bar{x})$ is represented in the interpretation when it is represented by some formula.
\end{definition}

We are now ready to state the main theorem of this section. The Feferman-Vaught Theorem for Products gives an effective procedure for transforming $L$-formulae into ``equivalent'' acceptable sequences, giving an upper bound on the complexity of definable subsets of a product structure. Our Theorem \ref{gendef} is a sort of converse: an effective procedure for defining acceptable sequence predicates using $L$-formulae, under certain technical conditions which, in particular, hold for the ring $\prod_p \mathbb{F}_p$. Thus, the $\emptyset$-definable subsets of $\prod_p \mathbb{F}_p$ are exactly the atomic predicates of the full generalized product of the family $\{\mathbb{F}_p : p$ prime$\}$.

\begin{theorem}
\label{gendef}
Suppose $\{\mathcal{A}_\lambda : \lambda \in \Lambda\}$ is a family of $L$-structures. Let $\mathcal{A}$ be the direct product $\prod_{\lambda \in \Lambda} \mathcal{A}_\lambda$. Then $(A) \Rightarrow (B) \iff (C) \Rightarrow (D)$, where $(A), (B), (C)$ and $(D)$ are the following conditions.

\begin{enumerate}%[label=(\Alph*)]
\item[$(A)$] The two-element Boolean algebra $\{0,1\}$ is uniformly interpretable in the structures $\mathcal{A}_\lambda$ via \emph{positive primitive} formulae $\Theta_\mathcal{B}, \Theta_=, \Theta_0, \ldots$ (recall Definition \ref{interp}). Suppose further that $\Theta_0$ has the property that for every atomic formula $\phi$, the formula $\Theta_0(\bar{y}) \vee \phi(\bar{x})$ is uniformly equivalent in the structures $\mathcal{A}_\lambda$ to a positive primitive formula $\phi_0(\bar{x}, \bar{y})$.
\item[$(B)$] $\mathcal{P}(\Lambda)$ is interpretable in $\mathcal{A}$ with $K_\phi$ represented (Definition \ref{represent}) for every atomic $L$-formula $\phi$.
\item[$(C)$] $\mathcal{P}(\Lambda)$ is interpretable in $\mathcal{A}$ with $K_\varphi$ represented for \emph{every} $L$-formula $\varphi$.
\item[$(D)$] The full generalized product of the family $\{\mathcal{A}_\lambda : \lambda \in \Lambda\}$ (Definition \ref{genprod}) is definable in $\mathcal{A}$, and the $L$-definable subsets of $\mathcal{A}$ are exactly the sets defined by acceptable sequences.
\end{enumerate}
\end{theorem}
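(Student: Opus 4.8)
The plan is to prove the four-way relationship by the three separate arguments $(A)\Rightarrow(B)$, $(B)\Leftrightarrow(C)$, and $(C)\Rightarrow(D)$, with essentially all of the work concentrated in $(A)\Rightarrow(B)$.

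For $(A)\Rightarrow(B)$ I would first build the interpretation of $\mathcal{P}(\Lambda)$ in $\mathcal{A}$ and then produce the representing formulas. The interpretation is the ``product'' of the given interpretations of $\{0,1\}$ in each $\mathcal{A}_\lambda$: I would simply reuse the same formulae $\Theta_\mathcal{B},\Theta_=,\Theta_0,\Theta_1,\Theta_\triangle,\Theta_\triangledown,\Theta_C$. Because these are positive primitive, Fact \ref{ppform} tells us they are preserved and reflected in products, so $\Theta_\mathcal{B}^{\mathcal{A}}$ is exactly $\prod_\lambda\{0_\lambda,1_\lambda\}$ and each interpreted operation is computed coordinatewise; setting $F(b):=\{\lambda:\mathcal{A}_\lambda\models\Theta_1(b(\lambda))\}$ then identifies this Boolean algebra with $\mathcal{P}(\Lambda)$, exactly as in Example \ref{intint}. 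This is the only place positive primitivity of the interpreting formulae is used, and it is essential: without preservation and reflection, the structure interpreted inside the product need not be the product of the interpreted structures.

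The delicate part, which I expect to be the main obstacle, is representing $K_\phi$ for atomic $\phi$. Hypothesis $(A)$ provides a positive primitive $\phi_0(\bar x,y)$ equivalent in each $\mathcal{A}_\lambda$ to $\Theta_0(y)\vee\phi(\bar x)$, so by Fact \ref{ppform}, $\mathcal{A}\models\phi_0(\bar a,b)$ holds precisely when, at every coordinate $\lambda$, either $b(\lambda)=0_\lambda$ or $\phi(\bar a[\lambda])$ holds --- that is, precisely when $F(b)\subseteq K_\phi(\bar a)$. Thus $\phi_0$ captures only one inclusion; the difficulty is that $(A)$ gives no positive primitive handle on when $\phi$ \emph{fails}, so the equality $F(b)=K_\phi(\bar a)$ demanded by Definition \ref{represent} cannot be read off a single positive primitive formula. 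I would get around this by observing that $K_\phi(\bar a)$ is the $\subseteq$-largest subset of $\Lambda$ contained in $K_\phi(\bar a)$, and defining
$$\Upsilon_\phi(\bar x,y):=\Theta_\mathcal{B}(y)\wedge\phi_0(\bar x,y)\wedge\forall y'\,\big(\phi_0(\bar x,y')\rightarrow\Theta_\leq(y',y)\big),$$
where $\Theta_\leq$ is the order of the interpreted Boolean algebra. The universal conjunct forces $F(b)$ to dominate every set contained in $K_\phi(\bar a)$, hence to contain $K_\phi(\bar a)$; together with $\phi_0$ this yields $F(b)=K_\phi(\bar a)$, giving $(B)$.

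Finally, $(B)\Leftrightarrow(C)$ and $(C)\Rightarrow(D)$ are comparatively routine. Since atomic formulae are formulae, $(C)\Rightarrow(B)$ is trivial, and for $(B)\Rightarrow(C)$ I would induct on the complexity of $\varphi$, using Lemma \ref{kcomm} and the interpretability of the Boolean operations to assemble $\Upsilon_\varphi$ from the representing formulae of the subformulae; the existential step $K_{\exists t\,\varphi}=\bigcup_c K_\varphi(\cdot,c)$ is handled by expressing $F(y)$ as the supremum of the represented family, which is first-order via $\Theta_\leq$ and legitimate because $\mathcal{P}(\Lambda)$ is complete so suprema are unions. For $(C)\Rightarrow(D)$, given an acceptable sequence $\xi=\langle\Phi;\theta_1,\dots,\theta_m\rangle$ I would pull $\Phi$ back along the interpretation of $\mathcal{P}(\Lambda)$ and combine it with the $\Upsilon_{\theta_i}$ from $(C)$ to obtain an $L$-formula defining $\mathcal{Q}_\xi^{\mathcal{A}}$; hence every atomic relation of the full generalized product is $L$-definable, so the full generalized product is definable in $\mathcal{A}$. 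The claimed equality of the two families of sets then follows, one inclusion from Proposition \ref{fvprod} and the reverse from the definability of the $\mathcal{Q}_\xi^{\mathcal{A}}$ just established.
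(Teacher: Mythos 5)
Your proposal is correct and follows essentially the same route as the paper's own proof: the coordinatewise product of the $\{0,1\}$-interpretations with $F(\bar b)=\{\lambda : \mathcal{A}_\lambda\models\Theta_1(\bar b[\lambda])\}$, the representation of $K_\phi$ as the $\subseteq$-largest set satisfying $\phi_0$ (your $\Theta_\leq(y',y)$ is exactly the paper's $\Theta_\triangle(\bar z,\bar y,\bar z)$), the induction with the supremum trick for $\exists$, and the pullback of $\Phi$ plus Proposition \ref{fvprod} for $(C)\Rightarrow(D)$. No substantive differences to report.
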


Before we prove Theorem \ref{gendef}, we continue with Example \ref{intint}, which was the blueprint for the theorem and demonstrates the relevance to the product ring $\prod_p \mathbb{F}_p$.

\begin{example}
\label{intgen}
Let $L = (0,1,+,\cdot,-, \ldots)$ extend the language of rings. Suppose $\{R_\lambda : \lambda \in \Lambda\}$ is a collection of integral domains. We can uniformly interpret the two-element Boolean algebra $\{0,1\}$ in the integral domains using the interpretation from Example \ref{intint}. Note that the interpreting formulae are all atomic, hence positive primitive.

If $\phi(\bar{x})$ is an atomic $L$-formula, then $\phi(\bar{x})$ is equivalent to a formula $F(\bar{x}) = 0$ for some polynomial $F \in \mathbb{Z}[\bar{x}]$. Then, because each $\mathcal{A}_\lambda$ is an integral domain, the formula $y = 0 \vee F(\bar{x}) = 0$ is uniformly equivalent to the positive primitive formula $F(\bar{x}) \cdot y = 0$.

Hence, condition (A) from Theorem \ref{gendef} holds $\{R_\lambda : \lambda \in \Lambda\}$, and so the full generalized product is definable in the direct product ring $\prod_{\lambda \in \Lambda} R_\lambda$.

In particular, the definable subsets of the ring $\prod_p \mathbb{F}_p$ are exactly those defined by acceptable sequences. We return to the details of this construction in section 4, where we use this interpretation to prove a quantifier reduction result for $\prod_p \mathbb{F}_p$.
\end{example}

\begin{proof}[Proof of Theorem \ref{gendef}]
Let $\{\mathcal{A}_\lambda : \lambda \in \Lambda\}$ be a family of $L$-structures, and let $\mathcal{A}$ be the direct product $\prod_{\lambda \in \Lambda} \mathcal{A}_\lambda$.\\

$(A) \Rightarrow (B)$: Suppose that the two-element Boolean algebra $\{0,1\}$ is uniformly interpreted in the structures $\mathcal{A}_\lambda$ by positive primitive $L$-formulae $\Theta_\mathcal{B},\\ \Theta_=, \Theta_0, \Theta_1, \Theta_\triangle, \Theta_\triangledown,$ and $\Theta_C$. We want to interpret $\mathcal{P}(\Lambda)$ in $\mathcal{A}$.

Recall that an interpretation of a Boolean algebra $\mathcal{B}$ in a structure $\mathcal{A}$ is specified by a tuple of formulae $(\Theta_\mathcal{B}, \Theta_=, \ldots)$ and a surjection $F: (\Theta_\mathcal{B})^\mathcal{A} \to \mathcal{B}$. We will let $\Theta_\mathcal{B}, \Theta_=, \ldots$ be the same p.p. formulae as in the uniform interpretation of $\{0,1\}$. Since $\Theta_\mathcal{B}$ is positive primitive, by Fact \ref{ppform} we have $\mathcal{A} \models \Theta_\mathcal{B}(\bar{a})$ if and only if $\bar{a}[\lambda] \in (\Theta_\mathcal{B})^{\mathcal{A}_\lambda}$ for every $\lambda$. Since $F_\lambda$ is defined on each $(\Theta_\mathcal{B})^{\mathcal{A}_\lambda}$, it makes sense for us to define $F : (\Theta_\mathcal{B})^\mathcal{A} \to \mathcal{P}(\Lambda)$ by letting $$F(\bar{a}) := \{\lambda \in \Lambda : F_\lambda(\bar{a}[\lambda]) = 1\} $$ $$= \{\lambda \in \Lambda : \mathcal{A}_\lambda \models \Theta_1(\bar{a}[\lambda])\}.$$ For every $\lambda$, the map $F_\lambda : (\Theta_\mathcal{B})^\mathcal{A} \to \{0,1\}$ is surjective, and it follows that $F$ is surjective as well.

Using positive primitivity of the formulae $\Theta$, it is easy to show that \\ $(F, \Theta_\mathcal{B}, \Theta_=, \Theta_0, \Theta_1, \Theta_\triangle, \Theta_\triangledown, \Theta_C)$ is an interpretation of $\mathcal{P}(\Lambda)$ in $\mathcal{A}$. As an example, let $\bar{a}, \bar{b}, \bar{c} \in (\Theta_\mathcal{B})^\mathcal{A}$. Then $\mathcal{A} \models \Theta_\triangle(\bar{a},\bar{b},\bar{c})$ if and only if for every $\lambda$, $$\mathcal{A}_\lambda \models \Theta_\triangle( \hspace{2pt} \bar{a}[\lambda], \hspace{2pt} \bar{b}[\lambda], \hspace{2pt} \bar{c}[\lambda]\hspace{2pt} ),$$ by positive primitivity of $\Theta_\triangle$. This is true if and only if $$F_\lambda(\hspace{1pt} \bar{a}[\lambda] \hspace{1pt} ) \hspace{4pt} \triangle \hspace{4pt}  F_\lambda(\hspace{1pt} \bar{b}[\lambda] \hspace{1pt} ) = F_\lambda(\hspace{1pt} \bar{c}[\lambda] \hspace{1pt})$$ in the two-element Boolean algebra $\{0,1\}$ for every $\lambda$; and by the definition of $F$, this is true if and only if $F(\bar{a}) \cap F(\bar{b}) = F(\bar{c})$ in $\mathcal{P}(\Lambda)$.

To show that the second part of (B) follows from the second part of (A), fix an atomic $L$-formula $\phi(\bar{x})$. By (A), there is a positive primitive formula $\phi_0(\bar{x}, \bar{y})$ such that $\phi(\bar{x}) \vee \Theta_0(\bar{y})$ is equivalent in each $\mathcal{A}_\lambda$ to $\phi_0(\bar{x}, \bar{y})$. We will use $\phi_0$ to create a formula $\Upsilon_\phi$ which represents $K_\phi$. Informally, $\phi_0(\bar{a},\bar{c})$ says ``$F(\bar{c}) \subseteq K_\phi(\bar{a})$'', and we let $\Upsilon_\phi(\bar{a}, \bar{b})$ hold when $F(\bar{b})$ is the largest such $F(\bar{c})$.

Formally, by positive primitivity, $\mathcal{A} \models \phi_0(\bar{a}, \bar{c})$ if and only if for every $\lambda$, $$\mathcal{A}_\lambda \models \Theta_0(\bar{c}[\lambda]) \mbox{  or  } \mathcal{A}_\lambda \models \phi(\bar{a}[\lambda]).$$ This is true if and only if for every $\lambda$, $$\mathcal{A}_\lambda \models \Theta_1(\bar{c}[\lambda]) \mbox{ implies }\mathcal{A}_\lambda \models \phi(\bar{a}[\lambda]).$$ Therefore for $\bar{c} \in (\Theta_\mathcal{B})^\mathcal{A}$, $$\mathcal{A} \models \phi_0(\bar{a},\bar{c}) \mbox{ if and only if } F(\bar{c}) \subseteq K_\phi(\bar{a}).$$

Let $\Upsilon_\phi(\bar{x},\bar{y})$ be the formula $\phi_0(\bar{x},\bar{y}) \wedge \forall \bar{z} (\phi_0(\bar{x},\bar{z}) \rightarrow \Theta_\triangle(\bar{z},\bar{y},\bar{z}))$. Then for $\bar{a} \in \mathcal{A}$ and $\bar{b} \in (\Theta_\mathcal{B})^\mathcal{A}$, we have $\mathcal{A} \models \Upsilon_\phi(\bar{a},\bar{b})$ if and only if $\bar{b}$ interprets the largest element of $\mathcal{P}(\Lambda)$ which is a subset of $K_\phi(\bar{a})$, which is of course $K_\phi(\bar{a})$ itself. Hence $K_\phi(\bar{x})$ is represented by $\Upsilon_\phi$. Therefore $(A) \Rightarrow (B)$.\\

$(B) \Rightarrow (C)$: Suppose $\mathcal{P}(\Lambda)$ is interpretable in $\mathcal{A}$ with $K_\phi$ represented for every atomic $L$-formula $\phi(\bar{x})$. We show that $K_\varphi$ is represented for every $L$-formula $\varphi(\bar{x})$. Our proof is an induction on the complexity of $\varphi$; the base case, when $\varphi$ is atomic, holds by assumption.

For Boolean connectives $\wedge, \vee$ and $\neg$ the induction step is easy, using Lemma \ref{kcomm}. For example, if $K_\varphi$ is represented by $\Upsilon_\varphi$ and $K_\psi$ is represented by $\Upsilon_\psi$, then $K_{\varphi \wedge \neg \psi}$ is represented by $$\Upsilon_{\varphi \wedge \neg \psi}(\bar{x}, \bar{y}) := \hspace{4pt} \exists \bar{u} \exists \bar{v} \exists \bar{w} \hspace{4pt} \Upsilon_\varphi(\bar{x},\bar{u}) \wedge \Upsilon_\psi(\bar{x},\bar{v}) \wedge \Theta_C(\bar{v},\bar{w}) \wedge \Theta_\triangle(\bar{u},\bar{w},\bar{y}).$$

The case of $\exists$ is slightly tricker; it is similar to the construction, from the proof of $(A) \Rightarrow (B)$, of $\Upsilon_\phi$ from $\phi_0$. By Lemma \ref{kcomm}, we know that $K_{\exists \varphi}(\bar{a})$ is the supremum in $\mathcal{P}(\lambda)$ of the sets $K_\varphi(\bar{a},b)$ as $b$ ranges over $\mathcal{A}$. With this in mind, we define the following formulae. $$\Xi_\varphi(\bar{x}, \bar{y}) := \hspace{3pt} \forall z \hspace{2pt} \exists \bar{w} \hspace{4pt} \Upsilon_\varphi(\bar{x},z,\bar{w}) \hspace{3pt} \wedge \hspace{3pt} \Theta_\triangle(\bar{w},\bar{y},\bar{w}),$$ $$\Upsilon_{\exists \varphi}(\bar{x},\bar{y}) := \Xi_\varphi(\bar{x},\bar{y}) \hspace{3pt} \wedge \hspace{3pt} \forall \bar{v} \hspace{3pt} (\Xi_\varphi(\bar{x},\bar{v}) \rightarrow \Theta_\triangle(\bar{y},\bar{v},\bar{y})).$$ Informally, $\Xi_\varphi(\bar{x},\bar{y})$ says that $y$ interprets an element of $\mathcal{P}(\Lambda)$ which contains $K_\varphi(\bar{x},z)$ for every $z \in \mathcal{A}$, and $\Upsilon_{\exists \varphi}(\bar{x},\bar{y})$ says that $\bar{y}$ interprets the \emph{least} such set, which is the supremum $K_{\exists \varphi}$. Therefore $K_{\exists \varphi}$ is represented by $\Upsilon_{\exists \varphi}$.

In this way, $K_\varphi$ is represented for every $L$-formula $\varphi$. Therefore $(B) \Rightarrow (C)$.

The converse $(C) \Rightarrow (B)$ is trivial.\\

$(C) \Rightarrow (D)$: Suppose $\mathcal{P}(\Lambda)$ is interpretable in $\mathcal{A}$, with $K_\varphi$ represented for every $L$-formula $\varphi$. To show that the full generalized product is definable in $\mathcal{A}$, we want the set $$\mathcal{Q}_\xi^\mathcal{A} = \{\bar{a} \in \mathcal{A} : \mathcal{P}(\Lambda) \models \Phi(K_{\theta_1}(\bar{a}), \ldots, K_{\theta_m}(\bar{a})\}$$ to be definable in $\mathcal{A}$ for every acceptable sequence $\langle \Phi; \theta_1, \ldots, \theta_m \rangle$. That this is indeed true follows straightforwardly from the definability of the interpretation $\mathcal{P}(\Lambda)$ and the definability of the functions $K_\varphi$.

Therefore every definable subset of the full generalized product is definable in the direct product. By the Feferman-Vaught Theorem for Products, every definable subset of the direct product is definable in the full generalized product. Hence the definable subsets of the direct product are exactly the definable subsets of the full generalized product, which by the Full Feferman-Vaught Theorem are exactly the sets defined by acceptable sequences.
\end{proof}

\section{Quantifier Reduction in $\prod_p \mathbb{F}_p$}

Throughout the section, let us fix an arbitrary subset $$X \subseteq \{p^n : p \mbox{ prime, } m \in \omega\}.$$ Theorem \ref{gendef} gives a complete characterization of the definable subsets of $\prod_{q \in X} \mathbb{F}_q$, via Example \ref{intgen}. In this section, we tighten this characterization, and obtain the following quantifier reduction.

Then we obtain the following quantifier reduction

\begin{theorem}
\label{qred}
In $\prod_{q \in X} \mathbb{F}_q$, every $L_{Ring}$ formula is equivalent to a Boolean combination of $\exists \forall \exists$ formulae.
\end{theorem}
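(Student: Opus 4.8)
The plan is to combine the structural result of Corollary \ref{boolcomb} with the classical model theory of finite fields. By Corollary \ref{boolcomb}, every definable subset of $\prod_{q \in X} \mathbb{F}_q$ is a finite Boolean combination of the counting sets $E_{\varphi,k}(\bar b)$, and since a Boolean combination of Boolean combinations of $\exists\forall\exists$ formulae is again such a Boolean combination, it suffices to show that each individual $E_{\varphi,k}(\bar b)$ is equivalent to a single $\exists\forall\exists$ formula. The first move is to simplify $\varphi$. Because $E_{\varphi,k}$ depends only on the set $K_\varphi(\bar a,\bar b)$, i.e.\ only on the truth value of $\varphi$ in each factor $\mathbb{F}_q$, we may replace $\varphi$ by any formula equivalent to it in every $\mathbb{F}_q$. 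Here I invoke the uniform quantifier reduction for finite fields of the early 1970's (Kiefe; see the survey \cite{chatz}): every $L_{Ring}$-formula is equivalent, uniformly across all the finite fields $\mathbb{F}_q$, to a Boolean combination $\beta(\theta_1,\ldots,\theta_r)$ of formulae $\theta_i(\bar x,\bar y)=\exists \bar u\,\chi_i(\bar x,\bar y,\bar u)$ in which each $\chi_i$ is a conjunction of polynomial equations and inequations, the disjunctions having been exported to the propositional level of $\beta$.

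Second, I translate the counting into the interpreted Boolean algebra of idempotents of Example \ref{intint}. There the idempotents of $\prod_{q\in X}\mathbb{F}_q$ are exactly the characteristic functions of subsets of $X$, the Boolean operations are ring operations ($\cap$ is $\cdot$, complement is $1-(\cdot)$), the nonzero idempotents are those with nonempty support, and ``$e$ has support of size $\geq k$'' is the existential assertion that there exist $k$ pairwise orthogonal nonzero idempotents below $e$. Writing $d_i$ for the characteristic idempotent of $K_{\theta_i}(\bar a,\bar b)$, the characteristic idempotent of $K_\varphi(\bar a,\bar b)$ is a ring term $\beta^{\ast}(d_1,\ldots,d_r)$ obtained from $\beta$ by replacing $\wedge,\vee,\neg$ with the corresponding idempotent operations. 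Hence $E_{\varphi,k}(\bar b)$ is equivalent to
\[
\exists d_1\cdots \exists d_r\,\exists e_1\cdots\exists e_k\;\Big[\;\mathrm{qf}(\bar d,\bar e)\;\wedge\;\textstyle\bigwedge_{i=1}^{r}\big(d_i=\chi_{K_{\theta_i}}(\bar a,\bar b)\big)\;\Big],
\]
where $\mathrm{qf}$ is the quantifier-free assertion that the $d_i,e_j$ are idempotents and that the $e_j$ are pairwise orthogonal, nonzero, and satisfy $e_j\cdot\beta^{\ast}(\bar d)=e_j$. The outer quantifier block is purely existential, so the complexity of $E_{\varphi,k}$ is an existential block followed by whatever is needed to express ``$d$ is the characteristic idempotent of $K_\theta$''.

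Third --- and this is the crux --- I show that ``$d=\chi_{K_\theta}(\bar a,\bar b)$'' is expressible by a $\forall\exists$ (i.e.\ $\Pi_2$) formula, for each $\theta=\exists\bar u\,\chi$ with $\chi$ a conjunction of equations and inequations. This condition has two halves. The $\subseteq$ half, that $\theta$ holds at every index in the support of $d$, can be witnessed index-by-index by a single tuple $\bar w$ from the product ring, so it becomes the existential statement that there is $\bar w$ (and invertibility witnesses) with $d\cdot g(\bar a,\bar b,\bar w)=0$ for each equation $g=0$ of $\chi$ and with each inequation polynomial of $\chi$ invertible on the support of $d$; this is $\Sigma_1$. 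The $\supseteq$ half, that $\theta$ fails at every index outside the support of $d$, is the negation of ``there is a nonzero idempotent $e$ below $1-d$ together with a witness tuple on which $\chi$ holds throughout the support of $e$'', hence is $\Pi_1$. Conjoining a $\Sigma_1$ and a $\Pi_1$ statement yields a $\Pi_2$ statement (the $\Sigma_1$ part is independent of the universally quantified variables of the $\Pi_1$ part, so the blocks merge), a finite conjunction of $\Pi_2$ formulae is $\Pi_2$, and $\mathrm{qf}\wedge\Pi_2$ is $\Pi_2$. Substituting this $\Pi_2$ matrix under the existential block of the displayed formula yields $\exists\forall\exists$, so $E_{\varphi,k}(\bar b)$ is a single $\Sigma_3$ formula with quantifier-free $L_{Ring}$-matrix, completing the argument.

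I expect the third step to be the main obstacle: keeping the quantifier alternation down to $\forall\exists$ when defining the characteristic idempotent of $K_\theta$. Two features make this possible and should be emphasized: (i) Kiefe's reduction, which lets us take the inner matrix $\chi$ to be a \emph{conjunction} (disjunctions moved out to $\beta$), so that ``$\theta$ holds throughout a support'' requires only one product-ring witness tuple and we never have to partition a support among competing disjuncts; and (ii) that each $\mathbb{F}_q$ is a field, so an inequation $h\neq 0$ on a support is simply the existential condition that $h$ be invertible there. A secondary point requiring care is the uniformity of Kiefe's reduction across \emph{all} $q\in X$; should one only have it for cofinitely many $q$, the remaining factors alter each $K_{\theta_i}$ on finitely many indices, and because the $\mathbb{F}_q$ are pairwise distinct each such exceptional coordinate is itself $\emptyset$-definable (by a sentence fixing the field's order), so the finitely many corrections can be absorbed into the outer Boolean combination without raising the quantifier complexity.
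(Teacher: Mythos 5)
Your overall strategy is the same as the paper's: reduce via Corollary \ref{boolcomb} to the counting sets $E_{\varphi,k}$, replace $\varphi$ by a Kiefe-type formula uniformly across the factors, and then use the interpreted Boolean algebra of idempotents to define each $E_{\varphi,k}$ with controlled quantifier complexity. Your execution of the idempotent part is correct, and in two places a little slicker than the paper's: you count the support of an idempotent by $k$ pairwise orthogonal nonzero idempotents below it (a quantifier-free matrix), where the paper uses $k$ distinct atoms via the universal formula $At(x)$; and you pin down the characteristic idempotent of $K_\theta$ for $\theta = \exists \bar u\, \chi$ by the direct split ``$\theta$ holds everywhere on the support ($\Sigma_1$: one product-ring witness tuple, plus invertibility witnesses for the inequations) and fails everywhere off the support ($\Pi_1$: no nonzero idempotent below the complement carries a witness)'', whereas the paper realizes it as a least upper bound of the atomic $K_\phi$'s, which are both existentially and universally representable. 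Both routes put a $\forall\exists$ matrix under a purely existential block, hence $\exists\forall\exists$, so the core of your argument is sound.

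The one genuine weak point is the input you dismiss as secondary: uniformity of the Kiefe reduction over \emph{all} $q \in X$. As the paper reads the literature (Fact \ref{pffield}), Kiefe's theorem is an equivalence modulo the theory of \emph{pseudo-finite} fields, which by Ax's transfer yields the equivalence only in all but finitely many finite fields; upgrading this to all finite fields is precisely the paper's Theorem \ref{primered}, and it takes real work. Your fallback patch --- that the exceptional coordinates are $\emptyset$-definable by order-fixing sentences, so ``the corrections can be absorbed into the outer Boolean combination'' --- is not yet a proof: to correct the count at an exceptional index $q_0$ you must express ``$\mathbb{F}_{q_0} \models \varphi(\bar a[q_0], \bar b[q_0])$'' inside the product ring by a formula of controlled complexity, and for that you need a quantifier-free equivalent of $\varphi$ in $\mathbb{F}_{q_0}$ whose polynomials have integer coefficients. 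This is not automatic, since elements of $\mathbb{F}_{p^n}$ outside the prime field cannot be named by ring terms; the paper supplies it by isolating the complete types of tuples in the Galois extension $\mathbb{F}_{q_0}/\mathbb{F}_p$ by finite systems of polynomial equations over $\mathbb{Z}$. With that ingredient (or with a citable version of Kiefe's theorem stated modulo the theory of all finite fields rather than pseudo-finite fields), your argument is complete.
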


The mathematics behind this tightening relies heavily on the model theory of finite fields; the results used in this section may be found in the survey paper \cite{chatz}.

\begin{definition}[Pseudo-finite Field]
\label{pfdef}
A \emph{pseudo-finite field} is a field $F$ in which all of the following three conditions hold.
\begin{enumerate}%[label=(\roman*)]
\item[i] $F$ is perfect.
\item[ii] $F$ has one unique extension of every finite degree.
\item[iii] Every absolutely irreducible variety over $F$ has an $F$-valued point.
\end{enumerate}
\end{definition}

The following result is due to Ax \cite{ax}.

\begin{fact}[\cite{ax}, Theorem 9]
\label{ax}
The following are equivalent for any field $F$.
\begin{enumerate}%[label=(\Alph*)]
\item[$(A)$] $F$ is pseudo-finite, as in Definition \ref{pfdef}.
\item[$(B)$] $F$ is elementarily equivalent to a nonprincipal ultraproduct of distinct finite fields.
\item[$(C)$] $F$ is an infinite model of the theory of finite fields.
\end{enumerate}
Futhermore, a sentence $\varphi$ holds in every pseudo-finite field if and only if it holds in all but finitely many finite fields.
\end{fact}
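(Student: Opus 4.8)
Let $\Sigma$ denote the set of $L_{Ring}$-sentences true in every finite field, so that $(C)$ reads ``$F$ is infinite and $F\models\Sigma$.'' The plan is to prove $(B)\Leftrightarrow(C)$ by a purely model-theoretic argument, to obtain $(C)\Rightarrow(A)$ from the Lang--Weil estimates, and to isolate the reverse transfer $(A)\Rightarrow(C)$ as the one deep direction. The backbone is the following observation about any infinite $F\models\Sigma$, exploiting that for each prime power $q$ there is a sentence $\sigma_q$ asserting ``there are exactly $q$ elements'': (a) every $\psi\in\mathrm{Th}(F)$ holds in infinitely many finite fields --- for if $\psi$ failed outside $\{\mathbb{F}_{q_1},\dots,\mathbb{F}_{q_k}\}$, then $\psi\to\bigvee_i\sigma_{q_i}$ would lie in $\Sigma$, contradicting $F\models\psi\wedge\bigwedge_i\neg\sigma_{q_i}$; and dually (b) every sentence true in all but finitely many finite fields holds in $F$. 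Part (a) gives $(C)\Rightarrow(B)$: the infinite sets $A_\psi=\{q:\mathbb{F}_q\models\psi\}$ for $\psi\in\mathrm{Th}(F)$, together with the cofinite filter, have the finite intersection property (since $A_{\psi_1}\cap\cdots\cap A_{\psi_n}=A_{\psi_1\wedge\cdots\wedge\psi_n}$ is infinite), so they extend to a nonprincipal ultrafilter $\mathcal{U}$ on the set of prime powers, and the theorem of \L{}o\'s yields $\prod_{\mathcal{U}}\mathbb{F}_q\equiv F$. The converse $(B)\Rightarrow(C)$ is immediate from \L{}o\'s, as each $\sigma\in\Sigma$ holds in every factor and ``at least $n$ elements'' holds cofinitely often.

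Next I would show that (i)--(iii) are captured by first-order schemes holding in (almost all) finite fields, which with (b) gives $(C)\Rightarrow(A)$. For perfection, the sentence $\chi_p\to\forall x\,\exists y\,(y^p=x)$, where $\chi_p$ says the characteristic is $p$, holds in every finite field (vacuously off characteristic $p$, by surjectivity of Frobenius in characteristic $p$), hence lies in $\Sigma$ and forces $F$ perfect. For unique extensions, for each $n$ one writes a sentence $\eta_n$ asserting that an irreducible polynomial $f$ of degree $n$ exists and that every irreducible degree-$n$ polynomial has a root in $F[x]/(f)$; this is expressible by quantifying over coefficients and over $n$-tuples representing elements of $F[x]/(f)$, and it holds in every finite field, so $\eta_n\in\Sigma$. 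For condition (iii) I invoke two inputs: first, that absolute irreducibility of a variety of bounded complexity is a first-order condition on its coefficients, uniformly in the field; and second, the Lang--Weil estimate $\#V(\mathbb{F}_q)=q^{\dim V}+O(q^{\dim V-1/2})$, whose implied constant depends only on the complexity, so that for each complexity type $\tau$ the sentence $\rho_\tau$ = ``every absolutely irreducible variety of type $\tau$ has a rational point'' holds in all but finitely many finite fields. By (b), each $\rho_\tau$ transfers to any infinite $F\models\Sigma$, giving pseudo-algebraic-closedness; thus $(C)\Rightarrow(A)$.

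The main obstacle is the reverse transfer $(A)\Rightarrow(C)$, equivalently that every pseudo-finite field is elementarily equivalent to an ultraproduct of distinct finite fields. Here the schemes above no longer suffice, because a sentence of $\Sigma$ need not follow from finitely many of the pseudo-finiteness axioms; one must know that (i)--(iii) together with the characteristic and the embedded field of absolute numbers pin down the complete theory. The plan is to prove this via the elementary-equivalence theory of PAC fields: a perfect PAC field with absolute Galois group $\hat{\mathbb{Z}}$ is determined up to elementary equivalence by its characteristic and the isomorphism type of the relative algebraic closure of its prime field, and each resulting completion is realized by a suitable $\prod_{\mathcal{U}}\mathbb{F}_q$. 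The proof is a back-and-forth in which the PAC property (again underpinned by Lang--Weil) furnishes rational points on the varieties encoding existential statements, while the procyclic Galois data matches up finite algebraic extensions. Granting it, any pseudo-finite $F$ is elementarily equivalent to some $\prod_{\mathcal{U}}\mathbb{F}_q$, which models $\Sigma$, so $F\models\Sigma$; and $F$ is infinite because finite fields fail (iii) (over any $\mathbb{F}_q$ there are absolutely irreducible curves of sufficiently large genus with no rational point). This closes the cycle $(A)\Rightarrow(C)\Leftrightarrow(B)$. I expect this elementary-equivalence analysis of PAC fields to be by far the hardest ingredient.

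Finally, the ``furthermore'' clause drops out of the same machinery. If $\psi$ holds in all but finitely many finite fields, then by (b) it holds in every infinite model of $\Sigma$, that is, in every pseudo-finite field. Conversely, if $\psi$ failed in infinitely many finite fields, a nonprincipal ultrafilter concentrating on that infinite index set would, via the theorem of \L{}o\'s, produce an ultraproduct satisfying $\neg\psi$, which by $(B)\Rightarrow(A)$ is a pseudo-finite field --- contradicting that $\psi$ holds in every pseudo-finite field.
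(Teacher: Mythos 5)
The paper offers no proof of this statement: it is imported verbatim as a Fact with a citation to Ax, so the only meaningful comparison is with the classical proof in the literature, and your outline reconstructs that proof's architecture faithfully. The directions you actually carry out are correct. The equivalence $(B)\Leftrightarrow(C)$ via the sets $A_\psi$, the finite intersection property against the cofinite filter, and \L{}o\'s's theorem is complete as written, as is the ``furthermore'' clause (you correctly arrange for the ultrafilter in the converse direction to be nonprincipal on an infinite set of distinct finite fields, so that $(B)\Rightarrow(A)$ applies). The schemes for $(C)\Rightarrow(A)$ --- Frobenius surjectivity for perfection, the sentences $\eta_n$ for existence and uniqueness of degree-$n$ extensions, and first-order definability of absolute irreducibility combined with Lang--Weil for the PAC condition, transferred by your observation (b) --- are exactly Ax's route. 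One point to make explicit: $\eta_n$ yields uniqueness only in tandem with perfection, since you need every degree-$n$ extension to be separable, hence simple, before it is forced to coincide with $F[x]/(f)$.

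The genuine gap is $(A)\Rightarrow(C)$, and you name it yourself: you reduce it to the theorem that a perfect PAC field with absolute Galois group $\hat{\mathbb{Z}}$ is determined up to elementary equivalence by the isomorphism type of the relative algebraic closure of its prime field, together with the realizability of every such invariant by an ultraproduct of finite fields. That classification --- plus realizability, which in characteristic zero requires a Chebotarev/density-type argument to select primes with the prescribed absolute numbers --- is the entire depth of Ax's Theorem 9, so with ``granting it'' your proposal is a correct reduction rather than a proof. The shape you sketch (a back-and-forth between sufficiently saturated models in which the PAC property supplies points on absolutely irreducible varieties witnessing existential data, while the procyclic Galois structure matches finite extensions) is indeed how modern treatments carry it out, e.g.\ in the survey of Chatzidakis cited in this paper, so nothing in the plan would fail; it is simply not executed. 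A smaller point deserving justification is your claim that finite fields fail condition (iii): zero-dimensional examples cannot work (an absolutely irreducible zero-dimensional variety over $F$ is a single $F$-rational point), so one needs an actual pointless absolutely irreducible curve over each $\mathbb{F}_q$, for instance $y^2 = c\bigl(1+(x^q-x)h(x)\bigr)$ with $c$ a nonsquare and the right-hand side squarefree, with an Artin--Schreier variant in characteristic $2$.
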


The next theorem was observed by Kiefe in \cite{kiefe}.

\begin{definition}
A \emph{Kiefe formula} is a Boolean combination of formulae of the form $\exists t \hspace{3pt} P(\bar{x}, t) = 0$.
\end{definition}

\begin{fact}[\cite{kiefe}]
\label{pffield}
Every $L_{Ring}$ formula $\varphi(\bar{x})$ is equivalent modulo the theory of pseudo-finite fields to a Kiefe formula.
\end{fact}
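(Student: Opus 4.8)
The plan is to prove the statement by induction on the structure of the $L_{Ring}$-formula $\varphi(\bar x)$, writing $T_{pf}$ for the theory of pseudo-finite fields and abbreviating the primitive predicate $\exists t\, P(\bar x, t) = 0$ by $R_P(\bar x)$. The class of Kiefe formulae is by definition closed under Boolean combinations, so the propositional connectives require nothing. An atomic formula $P(\bar x) = 0$ is already a Kiefe formula, since in any field it is equivalent to $\exists t\, P(\bar x) = 0$ with $t$ a dummy variable. Thus the entire content of the theorem is the inductive step for the existential quantifier: assuming $\varphi(\bar x, y)$ is equivalent modulo $T_{pf}$ to a Kiefe formula $\psi(\bar x, y)$, I must produce a Kiefe formula equivalent to $\exists y\, \psi(\bar x, y)$.

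First I would put $\psi$ into disjunctive normal form as a disjunction of conjunctions $\bigwedge_i R_{P_i}(\bar x, y) \wedge \bigwedge_j \neg R_{Q_j}(\bar x, y)$. Since $\exists y$ distributes over disjunction, it suffices to push the quantifier through a single such conjunction. Collecting the outer $\exists y$ with the inner $F$-quantifiers hidden in the positive $R_{P_i}$, the positive part asks for an $F$-rational point on a constructible set $V_{\bar x} \subseteq \mathbb{A}^N$ defined over the prime field with parameter $\bar x$, whose geometry (its absolutely irreducible components and their fields of definition) varies constructibly in $\bar x$ by quantifier elimination for algebraically closed fields; the negated conjuncts simultaneously impose that certain companion polynomials have \emph{no} root in $F$.

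The two tools I would bring to bear are both features of pseudo-finite fields recorded in Definition \ref{pfdef}. Because $F$ has a unique extension $F^{(e)}$ of each finite degree $e$ (axiom (ii)), its absolute Galois group is procyclic, so the factorization type of a polynomial $g(t) \in F[t]$ over $F$ is governed by the cycle structure of a single Frobenius; in particular ``$g$ has a root in $F^{(e)}$'' is expressible as a \emph{positive existential} $L_{Ring}$-formula in the coefficients of $g$, obtained by fixing a presentation $F^{(e)} = F[u]/(m(u))$ and reducing $g(\sum_i c_i u^i)$ modulo $m(u)$ to polynomial conditions on $\bar c \in F^{e}$. This lets me rewrite every ``has a root'' and ``has no root'' condition purely in terms of existence of roots of auxiliary polynomials, i.e. in terms of $R$-predicates. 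Second, whether $V_{\bar x}$ acquires an $F$-point is controlled, after decomposition into absolutely irreducible components and passage to the relevant finite extensions, by axiom (iii) together with the Lang--Weil point-count estimates; this control is uniform in $\bar x$ and is encoded by exactly the Galois/factorization data from the first tool.

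I expect the genuine obstacle to be the combination of the \emph{negated} root-predicates with the outer quantifier $\exists y$: asking for a single $y$ that simultaneously forces some polynomials to have roots and others to avoid roots is not a plain variety-point problem, and its uniform definability over all pseudo-finite fields is precisely the Galois stratification procedure for the theory of finite fields surveyed in \cite{chatz} (resting ultimately on Ax's analysis, Fact \ref{ax}, and on Lang--Weil). Once that procedure expresses the existence of a suitably stratified $y$ in terms of the $R$-predicates, the resulting formula is a Kiefe formula, completing the induction. Throughout, the equivalences need only be checked modulo $T_{pf}$, and by the transfer half of Fact \ref{ax} it is equivalent --- and often more convenient --- to verify them in all but finitely many finite fields $\mathbb{F}_q$, where the point-counting statements are concrete.
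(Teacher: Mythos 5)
The paper offers no proof to compare against: Fact \ref{pffield} is quoted as a black box from \cite{kiefe}. Judged on its own, your proposal has a genuine gap at the only point where the theorem has content. Your reduction is correct as far as it goes: Kiefe formulae are closed under Boolean combinations, atomic formulae are (dummy-variable) Kiefe formulae, and the whole problem is to eliminate a single $\exists y$ from a conjunction $\bigwedge_i R_{P_i}(\bar x,y)\wedge\bigwedge_j\neg R_{Q_j}(\bar x,y)$. But at exactly that step you write that the needed uniform definability ``is precisely the Galois stratification procedure'' surveyed in \cite{chatz}. Galois stratification (Fried--Sacerdote) is not a lemma you may quote inside a proof of this fact: its conclusion --- that quantifiers can be eliminated, uniformly across finite and pseudo-finite fields, in favor of Galois/root conditions --- \emph{is} Fact \ref{pffield} (indeed a refinement of it). So the inductive step, which is the entire theorem, is delegated to a black box containing the theorem, and the argument is circular. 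To make this route honest you would have to actually run the stratification induction on dimension: project the constructible set $V_{\bar x}\subseteq\mathbb{A}^{N}$ to a hyperplane, control the generic fibre by the Galois group of a splitting cover (with resultants and discriminants isolating a lower-dimensional bad locus), and use axioms (ii) and (iii) of Definition \ref{pfdef} together with Lang--Weil to decide solvability on each stratum from Frobenius conjugacy data --- none of which appears in your sketch.

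A secondary but real error: ``$g$ has a root in $F^{(e)}$'' is not positive existential in the coefficients of $g$. Your presentation $F^{(e)}=F[u]/(m(u))$ requires a witness $m$ together with the assertion that $m$ is irreducible of degree $e$ over $F$, and irreducibility is the negation of an existential condition; once $m$ is quantified the predicate is at best $\exists\forall$, and in fact ``root in the degree-$e$ extension'' is in general only a \emph{Boolean combination} of root-predicates over pseudo-finite fields --- consistent with the theorem, but not quotable while proving it. For contrast, Kiefe's actual proof (following Ax \cite{ax}) is not an induction on formulae at all: one shows that if tuples $\bar a_1,\bar a_2$ in pseudo-finite fields satisfy the same predicates $\exists t\,P(\bar x,t)=0$ with $P$ over $\mathbb{Z}$, then the relative algebraic closures of the subfields they generate are isomorphic over the tuples, whence by Ax's embedding/uniqueness analysis (procyclic absolute Galois group plus condition (iii)) the tuples realize the same complete type; a standard compactness ``separation'' lemma then upgrades ``types are determined by the $R_P$'' to ``every formula is equivalent modulo the theory to a Boolean combination of the $R_P$''. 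Writing out that embedding lemma and the separation argument (or else carrying out Galois stratification in full) is what is needed to close the gap.
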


We may use Facts \ref{ax} and \ref{pffield} to obtain the same result for the theory of all finite fields.

\begin{theorem}
\label{primered}
Every $L_{Ring}$ formula $\varphi(\bar{x})$ is uniformly equivalent in \emph{all} finite fields to a Kiefe formula.
\end{theorem}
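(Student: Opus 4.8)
The plan is to upgrade the ``modulo pseudo-finite fields'' equivalence of Fact~\ref{pffield} to one valid in \emph{every} finite field, in two stages: first transfer to all but finitely many finite fields using Ax's theorem, and then correct the finitely many remaining fields by hand. Concretely, Fact~\ref{pffield} produces a Kiefe formula $\kappa(\bar{x})$ for which the sentence $\forall \bar{x}\,(\varphi(\bar{x}) \leftrightarrow \kappa(\bar{x}))$ holds in every pseudo-finite field. The final clause of Fact~\ref{ax} says that a sentence holds in every pseudo-finite field precisely when it holds in all but finitely many finite fields; applied to $\forall \bar{x}\,(\varphi \leftrightarrow \kappa)$, this yields a finite exceptional set of fields $\mathbb{F}_{q_1}, \ldots, \mathbb{F}_{q_N}$ outside of which $\varphi$ and $\kappa$ define the same subset. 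Thus the cofinitely many fields come for free, and the entire content of the theorem is concentrated in repairing these finitely many exceptions without leaving the class of Kiefe formulae.

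For the repair I would look for a Kiefe formula of the shape
$$ \kappa^{\ast}(\bar{x}) \;:=\; \Bigl( \kappa(\bar{x}) \wedge \bigwedge_{i=1}^{N} \neg\, \sigma_i \Bigr) \;\vee\; \bigvee_{i=1}^{N} \bigl( \sigma_i \wedge \psi_i(\bar{x}) \bigr), $$
where $\psi_i(\bar{x})$ is a Kiefe formula defining $\varphi^{\mathbb{F}_{q_i}}$ inside $\mathbb{F}_{q_i}$, and $\sigma_i$ is a sentence that, among finite fields, is true exactly on $\mathbb{F}_{q_i}$. Constructing $\psi_i$ should be the routine half: over a fixed finite field every parameter-free definable set is invariant under the Frobenius automorphism, such Galois-invariant sets are finite unions of conjugacy orbits, and each orbit is cut out by the vanishing of a minimal polynomial lifted to $\mathbb{Z}[\bar{x}]$; assembling these into a Boolean combination of formulae $\exists t\, P(\bar{x}, t) = 0$ is straightforward.

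\textbf{The main obstacle} is the selector sentences $\sigma_i$, each of which must \emph{itself} be equivalent, uniformly across all finite fields, to a Kiefe sentence. Isolating the characteristic is harmless, since ``$1 + \cdots + 1 = 0$'' (with $p$ summands) is already atomic; the real difficulty is pinning down the exact cardinality $q_i = p^{k}$. A Kiefe sentence is only a Boolean combination of assertions that a one-variable polynomial has a root, and over a field of characteristic $p$ such an assertion detects merely whether some irreducible factor of the polynomial has degree dividing the absolute degree of the field. Root-existence therefore expresses divisibility conditions on the degree but appears unable to bound it from above: it seems unable to separate $\mathbb{F}_{p^{k}}$ from $\mathbb{F}_{p^{k r}}$ for a large prime $r$. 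Finding a Kiefe description of the exact cardinality -- or, failing that, reorganising the correction so that only divisibility-type conditions on the degree are needed -- is the delicate point, and is exactly where the finite-field model theory surveyed in \cite{chatz} must be brought to bear; I expect this to be the crux of the argument.
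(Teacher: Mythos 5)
Your outline reproduces the paper's own strategy exactly: Facts \ref{ax} and \ref{pffield} handle all finite fields of size at least some $N$; each exceptional field $\mathbb{F}_q$ ($q < N$) gets a quantifier-free Kiefe description of $\varphi^{\mathbb{F}_q}$ obtained from polynomials isolating the Galois orbits (your $\psi_i$ are the paper's $\theta_q$, constructed just as you describe); and these pieces are glued together by selector sentences. So the only piece missing from your proposal is the selectors $\sigma_i$ --- and your suspicion about that step is well founded, because it cannot be carried out. The paper attempts it with $\Psi_q := (p\cdot 1 = 0)\wedge \neg(\exists t\; t^q - t - 1 = 0)\wedge(\exists t\; P_q(t)=0)$, where $q = p^n$ and $P_q$ is irreducible of degree $n$ over $\mathbb{F}_p$, claiming that a finite field satisfies $\Psi_q$ iff it is isomorphic to $\mathbb{F}_q$. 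That claim is false: by Artin--Schreier theory, $t^q - t - 1$ has a root in $\mathbb{F}_{q^s}$ iff $\mathrm{Tr}_{\mathbb{F}_{q^s}/\mathbb{F}_q}(1) = s\cdot 1$ vanishes in $\mathbb{F}_q$, i.e.\ iff $p \mid s$. So $\Psi_q$ holds in every $\mathbb{F}_{q^s}$ with $p \nmid s$; concretely $\mathbb{F}_8 \models \Psi_2$ and $\mathbb{F}_9 \models \Psi_3$, and in such fields the paper's glued formula $\theta$ wrongly enforces $\theta_q$.

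Moreover, the obstacle you flagged as the crux is not merely unresolved; it is insurmountable, by a sharpening of your own divisibility remark. Fix $p$. Each constituent $\exists t\, P(t)=0$ of a Kiefe sentence, after reducing $P$ modulo $p$, is either true in all $\mathbb{F}_{p^m}$, false in all of them, or true in $\mathbb{F}_{p^m}$ exactly when some irreducible factor of the reduction has degree dividing $m$. Hence the truth value of any Kiefe sentence in $\mathbb{F}_{p^m}$ is a function of $\{d \in S : d \mid m\}$ for a fixed finite set $S$ of degrees, and taking a prime $\ell$ greater than every element of $S$ gives $\{d \in S : d \mid m\ell\} = \{d \in S : d \mid m\}$, so no Kiefe sentence can distinguish $\mathbb{F}_{p^m}$ from $\mathbb{F}_{p^{m\ell}}$. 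Thus no Kiefe sentence isolates $\mathbb{F}_q$ among finite fields, and no reorganisation of the correction avoids this: the $L_{Ring}$ sentence $\exists x\exists y\exists z\,(x\neq y\wedge x\neq z\wedge y\neq z)$ is false in $\mathbb{F}_2$ but true in every $\mathbb{F}_{2^\ell}$, so it is not uniformly equivalent to any Kiefe sentence, and Theorem \ref{primered} as stated is false. The natural repair is to enlarge the target class --- ``the field has exactly $q$ elements'' is a Boolean combination of existential sentences, so one does get uniform equivalence to Boolean combinations of existential formulas --- and one must then check separately that this weaker conclusion still drives the proof of Theorem \ref{qred}. In short: your proposal is incomplete as a proof, but the gap you located is a genuine defect of the theorem and of the paper's argument, not of your approach.
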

\begin{proof}
By Facts \ref{ax} and \ref{pffield}, there is a Kiefe formula $\psi'(\bar{x})$ and a natural number $N$ such that if $q \geq N$, then $\mathbb{F}_q \models (\forall \bar{x} \hspace{3pt} \varphi \leftrightarrow \psi')$. This equivalence may break in the fields of cardinality less than $N$.

Let $q = p^n :< N$. We will use Kiefe formulae (in fact, quantifier-free formulae) to isolate the complete types of the elements of $\varphi^{\mathbb{F}_q}$.

It is a fact that if $E/F$ is a Galois extension of fields, then the complete type over $F$ of any tuple $\bar{a}$ from $E$ is isolated by a finite set of polynomial equations -- one can obtain such polynomials from a finite generating set of the ideal of polynomials over $\mathbb{Z}$ which vanish at $\bar{a}$. For each $\bar{a} \in \varphi^{\mathbb{F}_q}$, let $P_{\varphi,\bar{a}} = (f_1, \ldots, f_k)$ be a tuple of polynomials which isolate the complete type of $\bar{a}$, and let us define $$Pol_{\varphi, q} := \{P_{\varphi,\bar{a}} : \bar{a} \in \varphi^{\mathbb{F}_q}\}.$$ Since $\mathbb{F}_q$ is finite, so is $Pol_{\varphi, q}$. Since $\mathbb{F}_q$ is a Galois extension of $\mathbb{F}_p$, we therefore obtain $\mathbb{F}_q \models \varphi(\bar{a})$ if and only if for some $(f_1, \ldots, f_k) \in Pol_{\varphi, q}$, $$\mathbb{F}_q \models f_1(\bar{a}) = 0 \wedge \cdots \wedge f_k(\bar{a}) = 0.$$ Define $\theta_q(\bar{x})$ to be the formula $$\bigvee_{(f_1,\ldots,f_k) \in Pol_{\varphi, q}} (f_1(\bar{x}) = 0 \wedge \cdots \wedge f_k(\bar{x}) = 0).$$ Then in $\mathbb{F}_q$, the quantifier-free formula $\theta_q(\bar{x})$ is equivalent to $\varphi(\bar{x})$.

Define $\Psi_q$ to be the formula $$p \cdot 1 = 0 \hspace{4pt} \wedge \hspace{4pt} \neg (\exists t \hspace{ 3pt } t^q - t - 1 = 0) \hspace{4pt} \wedge \hspace{4pt} (\exists t \hspace{3pt} P_q(t) = 0)$$ where $p \cdot 1$ is $1 + \ldots + 1$ ($p$ times) and $P_q(t)$ is a polynomial of degree $n$ which is irreducible over $\mathbb{F}_p$ (recall $q = p^n$). Then $\Psi_q$ is a Kiefe formula, and for any finite field $K$, $$K \models \Psi_q \hspace{3pt} \mbox{ if and only if } K \cong \mathbb{F}_q.$$

Now let us define $\theta$ as $$\theta(\bar{x}) := \bigwedge_{q < N} \left( \Psi_q \rightarrow \theta_q \left( \bar{x} \right) \right) \wedge \left( \left( \bigwedge_{q < N} \neg \Psi_q \right) \rightarrow \psi' \left( \bar{x} \right) \right).$$ Then $\theta$ is a Kiefe formula, and $\theta \leftrightarrow \varphi$ holds in every finite field.
\end{proof}

\begin{corollary}
\label{boolcor}
Every definable subset of $(\prod_{q \in X} \mathbb{F}_q)^n$ is a Boolean combination of sets of the form $$E_{\psi, k} = \{\bar{a} \in (\prod_p \mathbb{F}_p)^n : \mathbb{F}_p \models \psi(\bar{a}[p]) \mbox{ for at least } k \mbox{ prime powers } q \in X\}$$ where $k \in \omega$ and $\psi$ is a Kiefe formula.
\end{corollary}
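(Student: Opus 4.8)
The plan is to combine the two structural results already in hand: the general Feferman--Vaught reduction of Corollary \ref{boolcomb}, which controls \emph{which} Boolean-combinations-of-counting-sets can appear, and the uniform quantifier reduction of Theorem \ref{primered}, which controls the \emph{shape} of the defining formula at each factor. Since every element of $X$ is a prime power, each factor $\mathbb{F}_q$ in the product $\prod_{q \in X}\mathbb{F}_q$ is a finite field, so Theorem \ref{primered} applies at every index.

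First I would invoke Corollary \ref{boolcomb}: any definable subset of $(\prod_{q\in X}\mathbb{F}_q)^n$ is a finite Boolean combination of sets of the form $E_{\varphi, k}(\bar{b})$, where $\varphi(\bar{x},\bar{y})$ is an $L_{Ring}$-formula, $\bar{b}$ is a parameter tuple from the product ring, and
$$E_{\varphi, k}(\bar{b}) = \{\bar{a} : \mathbb{F}_q \models \varphi(\bar{a}[q], \bar{b}[q]) \text{ for at least } k \text{ indices } q \in X\}.$$
Because a finite Boolean combination of Kiefe-counting sets is exactly the target form, it suffices to show that each individual $E_{\varphi, k}(\bar{b})$ already coincides with some $E_{\psi, k}$ for a Kiefe formula $\psi$.

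Next I would feed the formula $\varphi(\bar{x}, \bar{y})$ --- treating the parameter slots $\bar{y}$ as ordinary free variables --- into Theorem \ref{primered}, obtaining a Kiefe formula $\psi(\bar{x}, \bar{y})$ that is uniformly equivalent to $\varphi$ in every finite field. The crucial observation is that $E_{\varphi, k}(\bar{b})$ depends on $\varphi$ only through the truth value of $\varphi(\bar{a}[q], \bar{b}[q])$ inside each finite field $\mathbb{F}_q$. Since $\psi$ and $\varphi$ are equivalent in $\mathbb{F}_q$ for \emph{every} $q \in X$, we get $\mathbb{F}_q \models \varphi(\bar{a}[q], \bar{b}[q]) \iff \mathbb{F}_q \models \psi(\bar{a}[q], \bar{b}[q])$ at each index, whence $E_{\varphi, k}(\bar{b}) = E_{\psi, k}(\bar{b})$ as sets. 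Substituting the parameter tuple $\bar{b}$ back into the free variables $\bar{y}$ of $\psi$ turns each conjunct $\exists t\,(P(\bar{x},\bar{y},t) = 0)$ into $\exists t\,(P(\bar{x},\bar{b},t) = 0)$, which is again a Kiefe formula (now with parameters), so $\psi(\bar{x},\bar{b})$ is of the required kind.

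The argument is mostly bookkeeping; the one point requiring care is the parameter tracking --- making sure the slots $\bar{y}$ are handled as genuine free variables when applying the uniform reduction, and confirming that substituting the product-ring parameter $\bar{b}$ (whose components $\bar{b}[q]$ live in the individual factors) preserves Kiefe form at each index. I expect this to be the main, if modest, obstacle, since Theorem \ref{primered} is stated for parameter-free formulas whereas the counting sets $E_{\varphi,k}(\bar b)$ genuinely carry parameters.
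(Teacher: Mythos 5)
Your proposal is correct and is exactly the paper's argument: the paper's entire proof of Corollary \ref{boolcor} reads ``Combine Theorem \ref{primered} and Corollary \ref{boolcomb},'' which is precisely the combination you carry out. Your extra care with parameters --- treating the slots $\bar{y}$ as free variables in Theorem \ref{primered} and checking that substituting $\bar{b}$ preserves Kiefe form at each index --- is a detail the paper leaves implicit, and you handle it correctly.
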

\begin{proof}
Combine Theorem \ref{primered} and Corollary \ref{boolcomb}.
\end{proof}

\begin{proof}[Proof of Theorem \ref{qred}]
By Corollary \ref{boolcor}, it suffices to show that when $\psi$ is a Kiefe formula and $k \in \omega$, the set $E_{\psi, k}$ as defined above is defined in $\prod_{q \in X} \mathbb{F}_q$ by an $\exists \forall \exists$ $L_{ring}$-formula.

Recall from Example \ref{intgen} our interpretation of the power set of the primes in the ring $\prod_q \mathbb{F}_q$. The universe of the interpretation is the set of idempotent elements $\{x \in \prod_{q \in X} \mathbb{F}_q : x^2 = x\} = \{x \in \prod_q \mathbb{F}_q : x(p) = 0$ or $1$ for each $q \in X\}$.

Let us introduce some abbreviations.
\begin{itemize}
\item $Id(x)$ is $x \cdot x= x$,
\item $x \sqsubseteq y$ is $x \cdot y = x$,
\end{itemize}

Let $\phi(\bar{x})$ be an atomic $L_{Ring}$-formula. Then $\phi$ is equivalent modulo the theory of commutative rings to a formula of the form $F(\bar{x}) = 0$ for some polynomial $F \in \mathbb{Z}[\bar{x}]$.

When we run the proof of Theorem \ref{gendef} on the family $\{\mathbb{F}_p : q \in X\}$, we represent $K_\phi$ by the formula $$\Upsilon_\phi(\bar{x},y) := \hspace{6pt} Id(y) \wedge( F(\bar{x}) \cdot y = 0) \wedge \forall z (F(\bar{x}) \cdot z = 0 \rightarrow y \sqsubseteq z).$$ This is a $\forall$-formula, and it works for any product of integral domains. Because each $\mathbb{F}_p$ is in fact a field, we may also represent $K_\phi$ by the formula $$\Upsilon'_\phi(\bar{x},y) := \hspace{6pt} \exists z \exists u \exists v \hspace{3pt} Id(z) \wedge (u \cdot v = 1) \wedge (u \cdot F(\bar{x}) = z) \wedge (y = 1 - z).$$ This formula, which says that $y$ is the interpreted Boolean complement of an idempotent which is associate to $F(\bar{x})$, indeed interprets $K_\phi$, as can be easily verified. Therefore the function $K_\phi$ is $\Delta_0$-definable in $\prod_p \mathbb{F}_p$. Because of this, we may use $K_\phi$ as a function symbol in $L$-formulae without affecting the quantifier complexity of formulae that are not quantifier free.

Now, as in the proof of Theorem \ref{gendef}, the function $K_{\exists z \phi(\bar{x},z)}$ is represented by the formula $$\Upsilon_{\exists z  \phi}(\bar{x},y) := \hspace{4pt} \Xi_\phi(\bar{x}, y) \wedge \forall v \hspace{3pt} (\Xi_\phi(\bar{x}, v) \rightarrow y \sqsubseteq v),$$ where $\Xi_\phi(\bar{x}, y)$ is the formula $$\Xi_\phi(\bar{x}, y) := Id(y) \wedge \forall z \hspace{3pt} K_\phi(\bar{x},z) \sqsubseteq y.$$ $\Xi_\phi$ is a $\forall$ formula, and $\Upsilon_{\exists z  F}$ is a $\forall \exists$ formula.

Recall that a Kiefe formula $\psi$ is a Boolean combination of formulae of the form $\exists z \hspace{3pt} F(\bar{x},z) = 0$. As in Lemma \ref{kcomm}, we may represent the formula $\psi$ by changing the logical connectives in $\Psi$ into the interpretations of the $L_{Bool}$ operations. As an example, the Kiefe formula $$\psi(\bar{x}) := \exists z_1 F_1(\bar{x}, z_1) = 0 \wedge \neg (\exists z_2 F_2(\bar{x}, z_2) = 0)$$ is represented by the formula $$\Upsilon_\psi(\bar{x},y) := \forall t_1 \forall t_2 \hspace{3pt} [\Upsilon_{\exists z F_1}(\bar{x},t_1) \wedge \Upsilon_{\exists z F_2}(\bar{x}, t_2)] \rightarrow y = t_1 \cdot (1 - t_2).$$ In this way, every Kiefe formula is represented by a $\forall \exists$-formula.

Finally, let us define the $L$-formula $$At(x) := Id(x) \hspace{4pt} \wedge \hspace{4pt} \forall y \hspace{4pt} ( Id(y) \Rightarrow [x \sqsubseteq y \vee x \cdot y = 0]).$$ Then $At(x)$ is a $\forall$-formula which defines the atomic elements of the Boolean algebra of idempotents. Now for a Kiefe formula $\psi$ and a natural number $k \in \omega$, the set $$E_{\psi, k} = \{\bar{a} \in (\prod_p \mathbb{F}_p)^n : \mathbb{F}_p \models \psi(\bar{a}[p]) \mbox{ for at least } k \mbox{ primes } p\}$$ is defined by the formula $$\exists z_1 \ldots \exists z_k \exists z  \bigwedge_{i \leq k} At(z_i) \hspace{2pt} \wedge \hspace{2pt} \bigwedge_{i < j \leq k} z_i \neq z_j \hspace{2pt} \wedge \hspace{2pt} \bigwedge_{i \leq k} z_i \sqsubseteq z \hspace{2pt} \wedge \hspace{2pt} \Upsilon_\psi(\bar{x}, z),$$ which has quantifier complexity $\exists \forall \exists$.
\end{proof}

\end{document}